\theoremstyle{plain}
\newtheorem{theorem}{Theorem}
\newtheorem{proposition}[theorem]{Proposition}
\newtheorem{lemma}[theorem]{Lemma}
\newtheorem{corollary}[theorem]{Corollary}
\theoremstyle{definition}
\newtheorem{remark}[theorem]{Remark}
\newcommand{\f}{\varphi}
\def\AA{\mathbb A}
\newcommand{\CC}{\mathbb C}
\newcommand{\GG}{\mathbf G}
\newcommand{\PP}{\mathbb P}
\newcommand{\MM}{\mathbf M}
\newcommand{\WW}{\mathbf W}
\newcommand{\E}{\mathcal E}
\newcommand{\F}{\mathcal F}
\def\O{\mathcal O}
\newcommand{\Coker}{\mathcal Coker}
\newcommand{\Aut}{\operatorname{Aut}}
\newcommand{\diag}{\operatorname{diag}}
\newcommand{\e}{\operatorname{e}}
\newcommand{\GL}{\operatorname{GL}}
\newcommand{\PGL}{\operatorname{PGL}}
\newcommand{\h}{\operatorname{h}}
\def\H{\operatorname{H}}
\newcommand{\Hilb}{\operatorname{Hilb}}
\newcommand{\Hom}{\operatorname{Hom}}
\newcommand{\inter}{\operatorname{i}}
\newcommand{\M}{\operatorname{M}}
\newcommand{\N}{\operatorname{N}}
\newcommand{\res}{\operatorname{res}}
\newcommand{\spann}{\operatorname{span}}
\newcommand{\Sym}{\operatorname{S}}
\newcommand{\dual}{{\scriptscriptstyle \operatorname{D}}}
\newcommand{\st}{{\scriptstyle \operatorname{s}}}
\newcommand{\sst}{{\scriptstyle \operatorname{ss}}}
\newcommand{\isom}{\simeq}
\newcommand{\lra}{\longrightarrow}
\newcommand{\tensor}{\otimes}
\begin{document}

\title{On two moduli spaces of sheaves supported on quadric surfaces}

\author{Mario Maican}

\address{Center for Geometry and its Applications, Pohang University of Science and Technology,
Pohang 790-784, Republic of Korea}

\email{m-maican@wiu.edu}

\subjclass[2010]{14D20, 14D22}

\keywords{Moduli of sheaves, Semi-stable sheaves, Geometric Invariant Theory.}

\begin{abstract}
We show that the moduli space of semi-stable sheaves on a smooth quadric surface,
having dimension $1$, multiplicity $4$, Euler characteristic $2$, and first Chern class $(2, 2)$,
is the blow-up at two points of a certain hypersurface in a weighted projective space.
\end{abstract}

\maketitle

Let $\MM$ be the moduli space of Gieseker semi-stable sheaves $\F$ on $\PP^1 \times \PP^1$ having Hilbert polynomial
$P_{\F}(m) = 4m + 2$, relative to the polarization $\O(1,1)$,
and first Chern class $c_1(\F) = (2, 2)$.
Let $\M_{\PP^3}(m^2+3m+2)$ be the moduli space of Gieseker semi-stable sheaves $\F$ on $\PP^3$
having Hilbert polynomial $P_{\F}(m) = m^2 + 3m + 2$.
Such sheaves are supported on quadric surfaces.
The purpose of this note is to show that $\M_{\PP^3}(m^2+3m+2)$ is isomorphic to a certain hypersurface in a weighted projective space
(see Proposition \ref{hypersurface}) and to 
give an elementary proof of a result of Chung and Moon \cite{chung_moon}
stating that $\MM$ is the blow-up of $\M_{\PP^3}(m^2+3m+2)$ at two regular points.

Let $l$, $m$, $n$ be positive integers. Let $V$ be a vector space over $\CC$ of dimension $l$.
The reductive group $G = \big( \GL(n, \CC) \times \GL(m, \CC) \big)/\CC^*$
acts by conjugation on the vector space $\Hom(\CC^n, \CC^m \tensor V)$ of $m \times n$-matrices with entries in $V$.
The resulting good quotient
\[
\N(V; m, n) = \N(l; m, n) = \Hom(\CC^n, \CC^m \tensor V)^\sst/\!\!/ G
\]
is called a \emph{Kronecker moduli space}.
Kronecker moduli spaces arise from the study of moduli spaces of torsion-free sheaves, as in \cite{drezet_reine}.
According to \cite[Corollary 3.7]{asterisque} and \cite[Lemma 5.2]{chung_moon}, the map
\[
\Hom(2\O_{\PP^3}(-1), 2\O_{\PP^3})^\sst \lra \M_{\PP^3}(m^2 + 3m + 2), \qquad \langle \f \rangle \longmapsto \langle \Coker(\f) \rangle,
\]
is a good quotient modulo $\big( \GL(2, \CC) \times \GL(2, \CC) \big)/\CC^*$. Thus, the above moduli space is isomorphic to $\N(4; 2, 2)$.
According to \cite[Remark 3.9]{asterisque}, $\M_{\PP^3}(m^2 + 3m + 2)$ is rational;
this result was reproved in \cite{chung_moon} using the wall-crossing method.

\begin{lemma}
\label{projection}
Assume that $\N(l; m, n)$ contains stable points.
Then the same is true of $\N(k; m, n)$ for all integers $k > l$, and, moreover,
$\N(k; m, n)$ is birational to $\AA^{(k-l)mn} \times \N(l; m, n)$.
\end{lemma}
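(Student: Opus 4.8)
The plan is to display a dense open subset of $\N(k; m, n)$ as an algebraic vector bundle of rank $(k-l)mn$ over a dense open subset of $\N(l; m, n)$; since vector bundles are Zariski-locally trivial, this yields the asserted birational equivalence. Fix a decomposition $W = V \oplus U$ with $\dim V = l$ and $\dim U = k - l$. As $G$ acts only on the factors $\CC^n$ and $\CC^m$, the resulting splitting
\[
\Hom(\CC^n, \CC^m \tensor W) \isom \Hom(\CC^n, \CC^m \tensor V) \oplus \Hom(\CC^n, \CC^m \tensor U)
\]
is $G$-equivariant; write $\f = (\f_V, \f_U)$ accordingly. The first observation is that stability is monotone in the number of arrows: a pair of subspaces of $\CC^n$ and $\CC^m$ invariant under all $m \times n$ blocks of $\f$ is in particular invariant under the blocks of $\f_V$, so a destabilising subrepresentation of $\f$ is a destabilising subrepresentation of $\f_V$. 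Hence $\f$ is stable whenever $\f_V$ is, and we obtain a nonempty $G$-invariant open subset
\[
\mathcal U := \Hom(\CC^n, \CC^m \tensor V)^\st \times \Hom(\CC^n, \CC^m \tensor U) \subseteq \Hom(\CC^n, \CC^m \tensor W)^\st .
\]
This already gives the first assertion. Also, the semistable loci, being open in affine spaces, are irreducible, so $\N(k; m, n)$ and $\N(l; m, n)$ are irreducible varieties.

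Next I would pass to the quotients. A stable Kronecker module has endomorphism algebra $\CC$, so its stabiliser in $\GL(n,\CC) \times \GL(m,\CC)$ is the central $\CC^*$, hence trivial in $G$; thus $G$ acts freely on $\Hom(\CC^n, \CC^m \tensor V)^\st$, and the quotient $\Hom(\CC^n, \CC^m \tensor V)^\st \to X$ is a principal $G$-bundle, where $X := \Hom(\CC^n, \CC^m \tensor V)^\st / G$ is a dense open subset of $\N(l; m, n)$. Likewise $\mathcal U$, lying in the stable locus, has a geometric quotient $\mathcal U \to \mathcal U/G$, and $\mathcal U/G$ is a dense open subset of $\N(k; m, n)$. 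The $G$-equivariant first projection $\mathcal U \to \Hom(\CC^n, \CC^m \tensor V)^\st$ descends to a morphism $q \colon \mathcal U/G \to X$, which is precisely the vector bundle over $X$ associated to the above principal $G$-bundle and the linear representation of $G$ on $\Hom(\CC^n, \CC^m \tensor U)$; its rank is $(k-l)mn$. Being a vector bundle over $X$, the space $\mathcal U/G$ is birational to $\AA^{(k-l)mn} \times X$, hence to $\AA^{(k-l)mn} \times \N(l; m, n)$; and $\mathcal U/G$ is birational to $\N(k; m, n)$. Chaining these identifications finishes the proof.

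The one point that calls for more than bookkeeping is the passage from the free $G$-action on the stable locus to an honest Zariski-locally trivial vector bundle $q$ — concretely, producing a local trivialisation of $\Hom(\CC^n, \CC^m \tensor V)^\st \to X$ over a dense open subset of $X$. If one prefers to keep the argument self-contained rather than invoke the structure of free GIT quotients, this can be arranged by hand: over the dense open locus where a chosen $m \times n$ block of $\f_V$ has maximal rank, use part of the $G$-action to normalise that block, check that the residual gauge group then acts with a slice on the remaining coordinates, and read off the trivialisation — along the fibre the entries of $\f_U$ appear as free affine coordinates. The remaining ingredients, namely equivariance of the splitting, monotonicity of stability, and the openness and density statements, are routine.
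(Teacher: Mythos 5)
Your proposal is correct and follows essentially the same route as the paper: you identify the $G$-equivariantly split locus $\Hom(\CC^n,\CC^m\tensor V)^\st\times\Hom(\CC^n,\CC^m\tensor U)$ inside the stable locus of the larger space (the paper gets this from King's criterion, you from the monotonicity of stability under adding arrows), view it as a trivial $G$-linearized vector bundle over the stable locus, and descend it along the principal $G$-bundle to a rank-$(k-l)mn$ vector bundle over an open subset of $\N(l;m,n)$ identified with an open subset of $\N(k;m,n)$. The only differences are presentational: the paper cites Huybrechts--Lehn, Theorem 4.2.14, for the descent where you invoke the associated-bundle construction (justifying triviality of stabilizers), and your optional hands-on trivialisation is not needed.
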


\begin{proof}
Let $U$, $V$ be vector spaces over $\CC$ of dimension $k - l$, respectively, $l$, and put $W = U \oplus V$.
The projection of $W$ onto the second factor induces a $G$-equivariant projection
\[
\pi \colon \Hom(\CC^n, \CC^m \tensor W) \lra \Hom(\CC^n, \CC^m \tensor V).
\]
From King's criterion of semi-stability \cite{king} we see that
\[
\pi^{-1} \big(\Hom(\CC^n, \CC^m \tensor V)^\st \big) \subset \Hom(\CC^n, \CC^m \tensor W)^\st.
\]
The left-hand-side, denoted by $E$, is a trivial $G$-linearized vector bundle over $\Hom(\CC^n, \CC^m \tensor V)^\st$
with fiber $\Hom(\CC^n, \CC^m \tensor U)$.
The geometric quotient map
\[
\Hom (\CC^n, \CC^m \tensor V)^\st \lra \N(V; m, n)^\st
\]
is a principal $G$-bundle, so we can apply \cite[Theorem 4.2.14]{huybrechts_lehn} to deduce that $E$ descends to a vector bundle $F$
over $\N(V; m, n)^\st$. Clearly, $F$ is the geometric quotient of $E$ by $G$, hence $F$ is isomorphic to an open subset of $\N(W; m, n)^\st$.
We conclude that $\N(W; m, n)$ is birational to $\AA^{(k-l)mn} \times \N(V; m, n)$.
\end{proof}

\begin{proposition}
\label{main_result}
\hfill
\begin{enumerate}
\item[(i)] For $l \ge 3$, $\N(l; 2, 2)$ is rational.
\item[(ii)] For $l \ge 3$ and $n \ge 1$, $\N(l; n, n+1)$ is rational.
\end{enumerate}
\end{proposition}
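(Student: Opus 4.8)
The plan is to obtain both parts from Lemma \ref{projection} by exhibiting an appropriate base case; recall that to apply that lemma the smaller Kronecker moduli space must both contain stable points and be rational.

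\emph{Part (i).} The moduli space $\N(2;2,2)$ contains no stable points (stability would force an indecomposable representation of dimension vector $(2,2)$ of the $2$-arrow Kronecker quiver with trivial endomorphism ring, which does not exist), so the base case must be $l=3$. First I would verify that $\N(3;2,2)$ contains stable points: writing a point of $\Hom(\CC^2,\CC^2\tensor\CC^3)$ as a triple $(\f_1,\f_2,\f_3)$ of endomorphisms of $\CC^2$, King's criterion with weight $(1,-1)$ amounts to excluding subrepresentations of dimension vector $(1,0)$, $(1,1)$, $(2,0)$ and $(2,1)$, and all of these are absent for a generic triple — for instance, for any nonzero $v$ the vectors $\f_1v,\f_2v,\f_3v$ generically span $\CC^2$, so no line through $v$ is carried into a common line. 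Next I would produce a birational model: using the $\big(\GL(2,\CC)\times\GL(2,\CC)\big)/\CC^*$-action I normalize the generically invertible $\f_1$ to the identity, leaving the diagonal $\PGL(2,\CC)$ acting by simultaneous conjugation on $(\f_2,\f_3)\in\AA^8$; thus $\N(3;2,2)$ is birational to $\AA^8/\!\!/\PGL(2,\CC)$. Finally I would invoke the rationality of this last quotient: it is classical that its coordinate ring is freely generated by $\operatorname{tr}\f_2$, $\operatorname{tr}\f_3$, $\det\f_2$, $\det\f_3$, $\operatorname{tr}(\f_2\f_3)$, and this also follows by an elementary diagonalization argument — conjugate $\f_2$ to $\diag(\lambda,\mu)$, whereupon the residual torus scales the off-diagonal entries $b,c$ of $\f_3$ with opposite weights, the invariant field being $\CC(\lambda,\mu,a,d,bc)$, which (even after the residual Weyl involution $\lambda\leftrightarrow\mu$, $a\leftrightarrow d$) is purely transcendental. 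Lemma \ref{projection} now gives that $\N(l;2,2)$ is birational to $\AA^{4(l-3)}\times\N(3;2,2)$ for all $l>3$, and together with the rationality of $\N(3;2,2)$ itself this proves (i).

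\emph{Part (ii).} Here I would choose the base case $l=2$. Since $(n+1)^2+n^2-2n(n+1)=1$, the dimension vector $(n+1,n)$ is a real Schur root for the $2$-arrow Kronecker quiver, so $\N(2;n,n+1)$ is irreducible (being the image of the irreducible semistable locus inside the affine space $\Hom(\CC^{n+1},\CC^n\tensor\CC^2)$), zero-dimensional, and nonempty, the unique indecomposable representation of this dimension vector being rigid and, being preprojective, stable — here stability equals semistability because $\gcd(n+1,n)=1$. Hence $\N(2;n,n+1)$ is a single reduced point; in particular it is rational and contains a stable point, so Lemma \ref{projection}, applied with $l=2$ and $k=l\ge 3$, shows that $\N(l;n,n+1)$ is birational to $\AA^{(l-2)n(n+1)}\times\N(2;n,n+1)\isom\AA^{(l-2)n(n+1)}$, which is rational.

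I expect the main obstacle to be the verification, in each case, that the base moduli space contains stable points — this is precisely the hypothesis that makes Lemma \ref{projection} usable. In part (i) this reduces to the short genericity argument above, and in part (ii) to the standard fact that a preprojective Kronecker representation with coprime dimension vector $(n+1,n)$ is stable. The only other non-formal input, the rationality of the conjugation quotient $\AA^8/\!\!/\PGL(2,\CC)$, is classical and also admits the elementary proof indicated.
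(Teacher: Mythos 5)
Your proposal is correct in substance, but it reaches both base cases by a genuinely different route than the paper. For (i), the paper simply quotes Dr\'ezet's result that $\N(3;2,2)\isom\PP^5$ (via $\langle\f\rangle\mapsto\langle\det(\f)\rangle$, stable points being irreducible conics) and then applies Lemma \ref{projection}; you instead re-prove the rationality of $\N(3;2,2)$ by slicing $\f_1=I$ and invoking the classical fact that the invariants of a pair of $2\times 2$ matrices under simultaneous conjugation form a polynomial ring in the five trace/determinant functions. This is precisely the slice trick the paper itself uses later for $\N(3;n,n)$ in Proposition \ref{square_rational}, and for full rigor your sketch should include the same bookkeeping done there: the map from the slice quotient is a bijective morphism, hence birational in characteristic zero, and the affine quotient $\AA^8/\!\!/\PGL(2,\CC)$ computes the birational type because generic orbits are closed with trivial stabilizer. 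For (ii), the paper keeps $l=3$ as the base case, citing Dr\'ezet's identification of an open subset of $\N(3;n,n+1)$ with an open subset of $\Hilb_{\PP^2}(n(n+1)/2)$, whereas you descend to $l=2$, where $(n+1,n)$ is a real Schur root of the two-arrow Kronecker quiver and $\N(2;n,n+1)$ is a single stable point; this is more elementary (no Hilbert-scheme input) and even gives the sharper conclusion that $\N(l;n,n+1)$ is birational to $\AA^{(l-2)n(n+1)}$, at the cost of needing the quiver-theoretic input. Two small repairs there: with source dimension $n+1$ and target dimension $n$ the unique indecomposable is preinjective rather than preprojective in the usual convention, and ``rigid, hence stable'' should be justified either by the standard fact that a general representation of a Schur root is stable for the canonical stability (which coincides with the stability used here, and stability equals semistability since $\gcd(n,n+1)=1$), or directly from Kronecker's classification of $n\times(n+1)$ pencils, which shows that the semistable locus is exactly the orbit of the single standard block and that this orbit is stable.
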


\begin{proof}
According to \cite[Lemma 25]{drezet_reine}, $\N(3; 2, 2)$ is isomorphic to $\PP^5$.
Identifying $\PP^5$ with the space of conic curves in $\PP^2$, the stable points correspond to irreducible conics.
Applying Lemma \ref{projection}, yields (i).

According to \cite[Propositions 4.5 and 4.6]{modules_alternatives},
the subset of $\N(3; n, n+1)$ of matrices whose maximal minors have no common factor is isomorphic
to the subset of $\Hilb_{\PP^2}(n(n+1)/2)$ of schemes that are not contained in any curve of degree $n-1$.
Thus, $\N(3; n, n+1)$ is birational to $\Hilb_{\PP^2}(n(n+1)/2)$, so it is rational.
Moreover, $\N(3; n, n+1)$ consists only of stable points. Applying Lemma \ref{projection}, yields (ii).
\end{proof}

\begin{proposition}
\label{square_rational}
For $l \ge 3$ and $n \ge 1$, $\N(l; n, n)$ is a rational variety.
\end{proposition}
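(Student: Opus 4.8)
The plan is to argue by induction on $n$. The cases $n=1,2$ are clear: $\N(l;1,1)\cong\PP^{l-1}$, and $\N(l;2,2)$ is rational by Proposition \ref{main_result}(i). For the inductive step I would realize $\N(l;n,n)$ birationally as an affine bundle over $\N(l;n,n-1)$. The base of this bundle is rational: transposing the matrices gives $\N(l;n,n-1)\cong\N(l;n-1,n)$, which is $\N(l;k,k+1)$ with $k=n-1$, hence rational by Proposition \ref{main_result}(ii). Moreover $\N(l;n,n-1)$ contains stable points — $\N(3;n-1,n)$ consists only of stable points by the last lines of the proof of Proposition \ref{main_result}, and Lemma \ref{projection} then propagates stable points to all $l\ge 3$ — so there is a genuine geometric-quotient locus to work over, exactly as in the proof of Lemma \ref{projection}.

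Concretely, split $\CC^n=\CC^{n-1}\oplus\CC e_n$ on the source side and consider the restriction map $\rho\colon\Hom(\CC^n,\CC^n\tensor V)\lra\Hom(\CC^{n-1},\CC^n\tensor V)$, $\f\mapsto\f|_{\CC^{n-1}}$, whose fibre is the affine space $\Hom(\CC e_n,\CC^n\tensor V)\cong\CC^n\tensor V$ of "last columns". A check with King's criterion shows that a stable point of $\N(l;n,n-1)$, after adjoining a generic last column, becomes a stable point of $\N(l;n,n)$ (enlarging the images of subspaces can only help the relevant inequalities), so $\rho$ induces a dominant rational map $\N(l;n,n)\dashrightarrow\N(l;n,n-1)$. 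The dimension count $\dim\N(l;n,n)-\dim\N(l;n,n-1)=(l-2)n+1=\dim(\CC^n\tensor V)-\bigl(\dim\GL(n)-\dim\GL(n-1)\bigr)$ says that the generic fibre should be an affine space of dimension $(l-2)n+1$; the aim is to prove this, together with local triviality, by descending the trivial bundle $\rho^{-1}$ of last columns along the principal $G$-bundle over the stable locus, invoking \cite[Theorem 4.2.14]{huybrechts_lehn} as in Lemma \ref{projection}.

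The main obstacle is that, unlike the projection used in Lemma \ref{projection}, the restriction map $\rho$ is equivariant only for the parabolic subgroup $P\subset\GL(n)$ stabilising $\CC^{n-1}$, not for the whole source group $\GL(n)$; thus $\rho$ does not descend to a morphism of GIT quotients, and no canonical $(n-1)$-dimensional quotient of the source is attached to a point of $\N(l;n,n)$. Making the argument work requires either choosing a rational slice transverse to the $\GL(n)/P$-directions, or reformulating everything in terms of $1$-dimensional sheaves on $\PP^2$ (identifying $V$ with the linear forms and $\f$ with $\Coker(\f)$): passing from an $n\times n$ to an $(n-1)\times n$ presentation then corresponds to an elementary modification of the cokernel, reducing the problem to rationality statements about Hilbert schemes of points and relative Picard schemes of families of plane curves of degree $n$. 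The genuinely delicate point in that reformulation — and where I expect the real work to lie — is controlling the quotient by the symmetric group permuting the points of the universal effective divisor that cuts out such sheaves; this finite-quotient rationality issue is precisely what does not follow formally from rationality of the total space, and it is the step that would need a new idea rather than a routine computation.
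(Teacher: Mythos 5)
Your proposal is not a complete proof, and you say so yourself: the inductive step is exactly where it breaks down. The restriction map $\rho\colon\Hom(\CC^n,\CC^n\tensor V)\to\Hom(\CC^{n-1},\CC^n\tensor V)$ is equivariant only under the parabolic subgroup of $\GL(n,\CC)$ preserving $\CC^{n-1}\subset\CC^n$, so the descent mechanism of Lemma \ref{projection} (a $G$-equivariant bundle projection over the stable locus, descended along the principal $G$-bundle via \cite[Theorem 4.2.14]{huybrechts_lehn}) is simply unavailable: a point of $\N(l;n,n)$ carries no distinguished hyperplane in the source $\CC^n$, hence no rational map to $\N(l;n,n-1)$ is actually produced, and the dimension count $(l-2)n+1$ cannot substitute for one. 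The two remedies you mention (a rational slice transverse to the $\GL(n)/P$-directions, or elementary modifications of cokernel sheaves on $\PP^2$) are left entirely unexecuted, and you yourself flag the residual symmetric-group quotient as needing ``a new idea''. As written, the argument establishes nothing beyond the trivial cases $n\le 2$.

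For contrast, the paper keeps $n$ fixed and uses Lemma \ref{projection} in the other direction, to reduce $l$ to $3$ (legitimate because the projection $V=U\oplus\CC w\to U$ \emph{is} $G$-equivariant: the group acts on the $\CC^n$ factors, not on $V$). For $l=3$ one writes $\f=\f_1x+\f_2y+\f_3z$, slices by $\f_1=I$ on the locus where $\f_1$ is invertible, and reduces to the rationality of pairs of matrices $(\psi_1,\psi_2)$ under simultaneous $\PGL(n,\CC)$-conjugation. There the finite-group difficulty you anticipate is resolved by a second slice: where $\psi_1$ has distinct eigenvalues one takes $\psi_1$ diagonal, the residual group is the normalizer $H=ST$ of the torus, the symmetric group $S$ acts freely on the set $D$ of invertible diagonal matrices with distinct entries, so the $E/T$-bundle descends and the quotient is birational to $D/S\times E/T$, with $D/S$ an open subset of $\Sym^n(\AA^1)\isom\AA^n$ and $E/T\isom(\AA^1\setminus\{0\})^{n^2-n+1}$. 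In other words, the symmetric-group quotient is tamed by elementary symmetric functions in one variable and a triviality-of-stabilizers descent argument, not by a new idea; but this is a genuinely different reduction from yours, and your inductive scheme would still require the unproven affine-bundle structure over $\N(l;n,n-1)$ to get off the ground.
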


\begin{proof}
The argument is inspired by \cite[Remark 3.9]{asterisque}.
In view of \cite[Section 3]{drezet_reine}, $\N(3; n, n)$ contains stable points.
This is due to the fact that we have the inequality $x < n/n < 1/x$, where $x$ is the smaller solution to the equation $x^2 - 3x +1 = 0$.
Thus, we are in the context of Lemma \ref{projection}, which asserts that $\N(l; n, n)$ is rational for $l \ge 3$ if $\N(3; n, n)$ is rational.
We may, therefore, restrict to the case when $l = 3$.
Let $V$ be a vector space over $\CC$ with basis $\{ x, y, z \}$.
An element $\f \in \Hom(\CC^n, \CC^n \tensor V)$ can be written uniquely in the form
$\f = \f_1 x + \f_2 y + \f_3 z$, where $\f_1, \f_2, \f_3 \in \Hom(\CC^n, \CC^n)$.
Let
\[
\Hom(\CC^n, \CC^n \tensor V)_0 \subset \Hom(\CC^n, \CC^n \tensor V)^\st
\]
be the open invariant subset given by the condition that $\f_1$ be invertible.
Let $X \subset \Hom(\CC^n, \CC^n \tensor V)_0$ be the closed subset given by the condition $\f_1 = I$.
The group $\PGL(n, \CC)$ acts on $X$ by conjugation.
The composite map
\[
X \hookrightarrow \Hom(\CC^n, \CC^n \tensor V)_0 \lra \Hom(\CC^n, \CC^n \tensor V)_0/G
\]
is surjective and its fibers are precisely the $\PGL(n, \CC)$-orbits.
Thus, it factors through a bijective morphism
\[
X/\PGL(n, \CC) \lra \Hom(\CC^n, \CC^n \tensor V)_0/G.
\]
In characteristic zero, bijective morphisms of irreducible varieties are birational.
We have reduced to the following problem.
Let $U$ be a complex vector space of dimension $2$ and let $\PGL(n, \CC)$ act on $\Hom(\CC^n, \CC^n \tensor U)$ by conjugation.
Then the resulting good quotient is rational.

Choose a basis $\{ y, z \}$ of $U$. An element $\psi \in \Hom(\CC^n, \CC^n \tensor U)$ can be uniquely written in the form
$\psi = y \psi_1 + z \psi_2$, where $\psi_1, \psi_2 \in \Hom(\CC^n, \CC^n)$.
Let
\[
\Hom(\CC^n, \CC^n \tensor U)_0 \subset \Hom(\CC^n, \CC^n \tensor U)
\]
be the open invariant subset given by the conditions that $\psi$ have trivial stabilizer and that $\psi_1$ be invertible and have distinct eigenvalues.
Let $Y \subset \Hom(\CC^n, \CC^n \tensor U)_0$ be the closed subset given by the condition that $\psi_1$ be a diagonal matrix.
Let $S, T \subset \PGL(n, \CC)$ be the image of the canonical embedding of the group of permutations of $n$ elements,
respectively, the subgroup of diagonal matrices.
Then $H = S T$ is a closed subgroup of $\PGL(n, \CC)$ leaving $Y$ invariant.
The composite map
\[
Y \hookrightarrow \Hom(\CC^n, \CC^n \tensor U)_0 \lra \Hom(\CC^n, \CC^n \tensor U)_0/\PGL(n, \CC)
\]
is surjective and its fibers are precisely the $H$-orbits.
Thus, it factors through a bijective morphism
\[
Y/H \longrightarrow \Hom(\CC^n, \CC^n \tensor U)_0/\PGL(n, \CC)
\]
that must be birational. We have reduced the problem to showing that $Y/H$ is rational.

Let $Y_0 \subset Y$ be the open $H$-invariant subset given by the condition that all entries of $\psi_2$ be non-zero.
Concretely, $Y_0 = D \times E$, where $D, E \subset \Hom(\CC^n, \CC^n)$ are the subset of invertible diagonal matrices with distinct entries
on the diagonal, respectively, the subset of matrices without zero entries.
The normal subgroup $T \le H$ acts trivially on $D$, hence $(D \times E)/T$ is a trivial bundle over $D$ with fiber $E/T$.
The induced action of $S = H/T$ is compatible with the bundle structure.
The stabilizer in $S$ of any $\psi_1 \in D$ acts trivially on the fiber over $\psi_1$, because it is trivial.
It follows that $(D \times E)/T$ descents to a fiber bundle $F$ over $D/S$.
Clearly, $F$ is isomorphic to $(D \times E)/H$, hence $(D \times E)/H$ is birational to $D/S \times E/T$.
Both $D/S$ and $E/T$ are rational, namely $D/S$ is isomorphic to an open subset of $\Sym^n (\AA^1) \isom \AA^n$,
while $E/T \isom (\AA^1 \setminus \{ 0 \})^{n^2-n+1}$.
In conclusion, $Y/H$ is rational.
\end{proof}

Let $r > 0$ and $\chi$ be integers. Let $\M_{\PP^2}(r, \chi)$ denote the moduli space of Gieseker semi-stable sheaves on $\PP^2$
having Hilbert polynomial $P(m) = rm+\chi$. It is well known that $\M_{\PP^2}(r, 0)$ is birational to $\N(3; r, r)$ and, if $r$ is even,
$\M_{\PP^2}(r, r/2)$ is birational to $\N(6; r/2, r/2)$. We obtain the following.

\begin{corollary}
The moduli spaces $\M_{\PP^2}(r, 0)$ and, if $r$ is even, $\M_{\PP^2}(r, r/2)$, are rational.
\end{corollary}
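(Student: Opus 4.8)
The plan is to reduce the assertion, via the birational equivalences recorded in the remark immediately above, to Proposition \ref{square_rational}, and then to use the fact that rationality is a birational invariant. Concretely: $\M_{\PP^2}(r, 0)$ is birational to $\N(3; r, r)$, which is rational by Proposition \ref{square_rational} applied with $l = 3$ and $n = r$ (note $r \ge 1$); and, when $r$ is even, $\M_{\PP^2}(r, r/2)$ is birational to $\N(6; r/2, r/2)$, which is rational by Proposition \ref{square_rational} applied with $l = 6$ and $n = r/2$ (note $r/2 \ge 1$). Since a variety birational to a rational variety is rational, the corollary will follow.

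The point that needs justification is the pair of birational equivalences. For a general $\F \in \M_{\PP^2}(r, 0)$ one first checks the cohomology vanishings that force the Beilinson spectral sequence for the exceptional collection $\O_{\PP^2}(-2), \O_{\PP^2}(-1), \O_{\PP^2}$ to collapse to a two-term resolution
\[
0 \lra r\,\O_{\PP^2}(-2) \overset{\f}{\lra} r\,\O_{\PP^2}(-1) \lra \F \lra 0,
\]
the $\O_{\PP^2}$-term dropping out for numerical reasons and the multiplicities being pinned down by Riemann--Roch; here $\f$ is a matrix of linear forms, i.e. an element of $\Hom(\CC^r, \CC^r \tensor \H^0(\O_{\PP^2}(1)))$ with $\dim \H^0(\O_{\PP^2}(1)) = 3$. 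Conversely, the cokernel of a general such $\f$ is a pure one-dimensional Gieseker-semistable sheaf with Hilbert polynomial $rm$, and King's criterion agrees with Gieseker semistability over a dense open set. This identifies a dense open subset of $\M_{\PP^2}(r, 0)$ with one of $\N(3; r, r)$. The case of $\M_{\PP^2}(r, r/2)$ is the same in outline, the general sheaf now admitting a resolution
\[
0 \lra \tfrac{r}{2}\,\O_{\PP^2}(-2) \overset{\f}{\lra} \tfrac{r}{2}\,\O_{\PP^2} \lra \F \lra 0
\]
with $\f$ a matrix of quadrics, since $\dim \H^0(\O_{\PP^2}(2)) = 6$; a Hilbert polynomial computation confirms $P_{\F}(m) = rm + r/2$, and one obtains a birational map onto $\N(6; r/2, r/2)$.

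The main obstacle is therefore the construction of these two birational equivalences: verifying the cohomological genericity conditions for members of $\M_{\PP^2}(r, \chi)$ and matching the two semistability notions. This, however, is part of the classical description of moduli of sheaves on $\PP^2$ via exceptional bundles, so in practice I would simply cite it; all of the new content of the corollary is supplied by Proposition \ref{square_rational}.
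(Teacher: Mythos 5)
Your proposal is correct and follows essentially the same route as the paper: the corollary is exactly the combination of the classical birational identifications $\M_{\PP^2}(r,0) \sim \N(3;r,r)$ and $\M_{\PP^2}(r,r/2) \sim \N(6;r/2,r/2)$ (which the paper simply quotes as well known) with Proposition \ref{square_rational} and the birational invariance of rationality. Your sketch of the generic two-term resolutions $0 \to r\O_{\PP^2}(-2) \to r\O_{\PP^2}(-1) \to \F \to 0$ and $0 \to \tfrac{r}{2}\O_{\PP^2}(-2) \to \tfrac{r}{2}\O_{\PP^2} \to \F \to 0$ is the standard justification of those identifications, so there is nothing to add.
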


\noindent
The rationality of $\M_{\PP^2}(3, 0)$ and $\M_{\PP^2}(4, 2)$ is already known from \cite{lepotier_revue}.

The maps
\[
\det \colon \Hom(\CC^2, \CC^2 \tensor V) \lra \Sym^2 V, \qquad \det(\f) = \f_{11} \f_{22} - \f_{12} \f_{21},
\]
and
\[
\e \colon \Hom(\CC^2, \CC^2 \tensor V) \lra \Lambda^4 V, \qquad \e(\f) = \f_{11} \wedge \f_{22} \wedge \f_{12} \wedge \f_{21}
\]
are semi-invariant in the sense that for any $(g, h) \in G$ and $\f \in \Hom(\CC^2, \CC^2 \tensor V)$,
\[
\det((g, h) \f) = \det(g)^{-1} \det(h) \det(\f), \qquad \e((g, h) \f) = \det(g)^{-2} \det(h)^2 \e(\f).
\]
Using King's criterion of semi-stability \cite{king}, it is easy to see that $\f$ is semi-stable if and only if $\det(\f) \neq 0$ and is stable
if and only if $\det(\f)$ is irreducible in $\Sym^* V$.
In the case when $\dim(V) = 3$, the isomorphism $\N(V; 2, 2) \to \PP(\Sym^2 V)$ of \cite{drezet_reine} is given by
$\langle \f \rangle \mapsto \langle \det(\f) \rangle$.

In the sequel we will assume that $\dim(V) = 4$ and that $m = 2$, $n = 2$.
Choose bases $\{ x, y, z, w \}$ of $V$ and $\{ v_1, v_2, v_3, v_4 \}$ of $V^*$.
Consider the semi-invariant functions
\[
\epsilon, \rho \colon \Hom(\CC^2, \CC^2 \tensor V) \lra \CC, \qquad \epsilon(\f) = \inter_{v_1 \wedge v_2 \wedge v_3 \wedge v_4} \e(\f),
\]
\[
\rho(\f) = \inter_{v_1 \wedge v_2 \wedge v_3 \wedge v_4}
(\inter_{v_1} \det(\f) \wedge \inter_{v_2} \det(\f) \wedge \inter_{v_3} \det(\f) \wedge \inter_{v_4} \det(\f)).
\]
Here $\inter_v$ denotes the internal product with a vector $v \in V^*$.

\begin{proposition}
\label{functional_relation}
We have the relation $\epsilon^2 = \rho$.
\end{proposition}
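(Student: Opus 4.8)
The plan is to reduce the asserted identity to a factorization of determinants after choosing coordinates.

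First I would write the entries $\f_{11}, \f_{12}, \f_{21}, \f_{22}$ of $\f$ as column vectors of length $4$ in the basis $\{x, y, z, w\}$, and let $A$ be the $4 \times 4$ matrix whose columns are, in this order, $\f_{11}, \f_{22}, \f_{12}, \f_{21}$. Since $\{v_1, v_2, v_3, v_4\}$ is the dual basis of $\{x, y, z, w\}$, pairing top exterior powers gives $\inter_{v_1 \wedge v_2 \wedge v_3 \wedge v_4}(\f_{11} \wedge \f_{22} \wedge \f_{12} \wedge \f_{21}) = \det(A)$, that is, $\epsilon(\f) = \det(A)$. Thus it remains to show that $\rho(\f) = \det(A)^2$.

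To compute $\rho(\f)$ I would introduce the block-diagonal $4 \times 4$ matrix
\[
J = \begin{pmatrix} 0 & 1 & 0 & 0 \\ 1 & 0 & 0 & 0 \\ 0 & 0 & 0 & -1 \\ 0 & 0 & -1 & 0 \end{pmatrix},
\]
whose two blocks encode the way the pairs $(\f_{11}, \f_{22})$ and $(\f_{12}, \f_{21})$ enter $\det(\f) = \f_{11} \f_{22} - \f_{12} \f_{21} \in \Sym^2 V$. Using that $\inter_{v_i}$ acts on the symmetric algebra as the derivation $\partial/\partial x_i$, one checks directly that
\[
\inter_{v_i} \det(\f) = (\f_{11})_i\, \f_{22} + (\f_{22})_i\, \f_{11} - (\f_{12})_i\, \f_{21} - (\f_{21})_i\, \f_{12},
\]
where $(\f_{kl})_i$ is the $i$-th coordinate of $\f_{kl}$; equivalently, the coordinate vector of $\inter_{v_i}\det(\f)$ is the $i$-th row of the symmetric matrix $A J A^{\mathrm{T}}$, the factors of $2$ produced on the diagonal by $\partial/\partial x_i$ on $x_i^2$ matching those coming from $J$. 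Hence $\inter_{v_1}\det(\f) \wedge \cdots \wedge \inter_{v_4}\det(\f) = \det(A J A^{\mathrm{T}}) \cdot (x \wedge y \wedge z \wedge w)$, so that $\rho(\f) = \det(A J A^{\mathrm{T}})$.

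The conclusion is then immediate from $\det(A J A^{\mathrm{T}}) = \det(A)^2 \det(J)$ together with $\det(J) = (-1)\cdot(-1) = 1$, which yields $\rho(\f) = \det(A)^2 = \epsilon(\f)^2$. The entire content of the argument is the factorization $A J A^{\mathrm{T}}$ of the polarization of the quadratic form $\det(\f)$; the only point needing care is a consistent normalization of the internal product on $\Sym^\bullet V$ and the attendant sign bookkeeping, after which no spurious scalar factor survives. As a weak consistency check one may also observe that $\epsilon^2$ and $\rho$ are both semi-invariant of weight $\det(g)^{-4}\det(h)^4$, though of course this falls short of a proof.
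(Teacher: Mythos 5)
Your argument is correct, and it takes a genuinely different route from the paper. You prove the identity globally and in one stroke: writing $A$ for the matrix of coordinate columns $(\f_{11},\f_{22},\f_{12},\f_{21})$, you identify the coordinate vectors of the contractions $\inter_{v_i}\det(\f)$ with the rows of the polar matrix $AJA^{\mathrm T}$ of the quadratic form $\det(\f)=\f_{11}\f_{22}-\f_{12}\f_{21}$, so that $\rho(\f)=\det(AJA^{\mathrm T})=\det(A)^2\det(J)=\epsilon(\f)^2$ since $\det(J)=1$; I checked the row identification (including the diagonal factors of $2$) and the sign bookkeeping, and they are right. The paper instead argues by semi-invariance: it first notes that the identity is insensitive to a change of basis of $V^*$, restricts to the dense open locus where $\det(\pi(\f))$ is irreducible, uses the isomorphism $\N(U;2,2)\isom\PP(\Sym^2 U)$ to normalize $\f$ to the form $\left[\begin{smallmatrix} x+aw & y+bw\\ z+cw & x+dw\end{smallmatrix}\right]$, and then computes $\epsilon=d-a$ and $\rho=(a-d)^2$ by an explicit $4\times 4$ determinant. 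Your proof avoids the reduction to a dense open subset, the normal form, and any appeal to stability or to Dr\'ezet's identification, and it explains structurally why the identity holds (the discriminant of $\det(\f)$ is the square of $\epsilon$, via $\det(AJA^{\mathrm T})=\det(A)^2\det(J)$); the paper's route, on the other hand, recycles a normal-form computation that it reuses verbatim in the proof of Proposition \ref{hypersurface}. One small point: the paper chooses the basis $\{v_1,v_2,v_3,v_4\}$ of $V^*$ independently of $\{x,y,z,w\}$, whereas you assume it is the dual basis; to cover the statement as written you should add the one-line observation (made at the start of the paper's proof) that under a change of basis of $V^*$ with matrix $\upsilon$ one has $\epsilon\mapsto\det(\upsilon)\,\epsilon$ and $\rho\mapsto\det(\upsilon)^2\rho$, so the dual-basis case suffices.
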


\begin{proof}
Let $\{ v_1', v_2', v_3', v_4' \}$ be another basis of $V^*$ and let $\upsilon \in \GL(4, \CC)$ be the change-of-basis matrix.
With respect to this basis we define the functions $\rho'$ and $\epsilon'$ as above.
Then $\epsilon'(\f) = \det(\upsilon) \epsilon(\f)$ and $\rho'(\f) = \det(\upsilon)^2 \rho(\f)$,
hence $\epsilon(\f)^2 = \rho(\f)$ if and only if $\epsilon'(\f)^2 = \rho'(\f)$.
Put $U = \spann \{ x, y, z \}$ and let
\[
\pi \colon \Hom(\CC^2, \CC^2 \tensor V) \lra \Hom(\CC^2, \CC^2 \tensor U)
\]
be the morphism induced by the projection of $V = U \oplus \CC w$ onto the first factor.
It is enough to verify the relation on the Zariski open subset given by the condition that $\det(\pi(\f))$ be irreducible.
Changing, possibly, the basis of $U$, we may assume that $\det(\pi(\f)) = x^2 - yz$.
Since $\pi(\f)$ is stable, and since $\N(U; 2, 2)$ is isomorphic to $\PP(\Sym^2 U)$, we have
\[
\pi(\f) \sim \left[
\begin{array}{cc}
x & y \\
z & x
\end{array}
\right], \quad \text{so we may write} \quad
\f = \left[
\begin{array}{cc}
x + aw & y + bw \\
z + cw & x + dw
\end{array}
\right].
\]
We have
\[
\det(\f) = x^2 - yz + (a+d)xw - cyw - bzw + (ad-bc)w^2,
\]
\[
\e(\f) = (d-a) x \wedge y \wedge z \wedge w.
\]
Since we are free to choose the basis of $V^*$, we choose $\{ v_1, v_2, v_3, v_4 \}$ to be the dual of $\{x, y, z, w\}$.
We have
\begin{align*}
\inter_{v_1} \det(\f) & = \frac{\partial}{\partial x} \det(\f) = 2x + (a+d)w, \\
\inter_{v_2} \det(\f) & = \frac{\partial}{\partial y} \det(\f) = -z - cw, \\
\inter_{v_3} \det(\f) & = \frac{\partial}{\partial z} \det(\f) = -y - bw, \\
\inter_{v_4} \det(\f) & = \frac{\partial}{\partial w} \det(\f) = (a+d)x - cy - bz + 2(ad - bc)w,
\end{align*}
\[
\epsilon(\f) = d - a, \qquad \rho(\f) = \left|
\begin{array}{cccc}
2 & 0 & 0 & a + d \\
0 & 0 & -1 & -c \\
0 & -1 & 0 & -b \\
a + d & -c & -b & 2(ad - bc)
\end{array}
\right| = (a-d)^2.
\]
In conclusion, $\epsilon(\f)^2 = (d - a)^2 = \rho(\f)$.
\end{proof}

\noindent
Consider the action of $\CC^*$ on $\Sym^2 V \oplus \Lambda^4 V$ given by $t (q, p) = (tq, t^2 p)$ and let $\PP$
denote the weighted projective space $\big( (\Sym^2 V \oplus \Lambda^4 V) \setminus \{ 0 \} \big)/\CC^*$.
Consider the map
\[
\eta \colon \N(V; 2, 2) \lra \PP, \qquad \eta(\langle \f \rangle) = \langle \det(\f), \e(\f) \rangle.
\]
Choose coordinates on $\PP$ given by the choice of basis $\{ x, y, z, w \}$ of $V$.
In view of Proposition \ref{functional_relation}, the image of $\eta$ is contained in the hypersurface $H \subset \PP$
given by the equation $\res(q) = p^2$, where $\res(q)$ denotes the resultant of the quadratic form $q$.

\begin{proposition}
\label{hypersurface}
Assume that $\dim(V) = 4$. Then the map $\eta \colon \N(V; 2, 2) \to H$ is an isomorphism.
\end{proposition}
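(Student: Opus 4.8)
The plan is to show that $\eta$ is a bijective proper morphism onto the variety $H$ and that $H$ is normal; since we work over $\CC$, such a morphism is automatically finite and birational, hence an isomorphism because $H$ is normal. Both sides are projective and irreducible: $\N(V;2,2)$ is a moduli space and is a good quotient of the irreducible affine space $\Hom(\CC^2,\CC^2\tensor V)$, while $H$ is closed in the weighted projective space $\PP$ and is cut out by the irreducible polynomial $\res(q)-p^2$ --- irreducible because $\res(q)$ is, up to a nonzero scalar, the determinant of the generic $4\times4$ symmetric matrix, hence irreducible, hence not a perfect square. The dimensions agree: $\dim\N(V;2,2)=4\cdot4-(4+4-1)=9=\dim\PP-1$.

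First I would establish that $H$ is normal, by Serre's criterion. The only singular point of $\PP$ is the one with vanishing quadratic part, and it does not lie on $H$, since there $\res(q)-p^2=-p^2\neq0$; hence $H$ is a hypersurface in the smooth locus of $\PP$, so it is Cohen--Macaulay, in particular it satisfies $S_2$. For $R_1$ one computes the singular locus of $H$ from the vanishing of the partial derivatives of $\res(q)-p^2$: the derivative in $p$ forces $p=0$, while the gradient of $\res(q)$ is proportional to the matrix of cofactors of the symmetric matrix of $q$, which vanishes precisely when $q$ has rank at most $2$. Thus $\operatorname{Sing}(H)=\{p=0,\ \operatorname{rank}(q)\le2\}$, which is the image of the multiplication map $\PP(V)\times\PP(V)\to\PP(\Sym^2V)$, of dimension $6$, hence of codimension $3$ in $H$; so $H$ is normal.

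Next, surjectivity. A rank-$4$ quadratic form may, after a change of basis of $V$, be taken to be $q=xw-yz$; the matrix $\bigl(\begin{smallmatrix}x&y\\z&w\end{smallmatrix}\bigr)$ and its transpose are both stable (their determinant $q$ is irreducible), both have determinant $q$, and --- differing by a transposition of two entries --- they have opposite, nonzero values of $\e$. Acting by $\GL(V)$, the image of $\eta$ therefore contains every $\langle q,p\rangle\in H$ with $\res(q)\neq0$; this set is dense in $H$ and $\eta$ is proper, hence a closed map, so $\eta$ is surjective. Over this open set $\{q=0\}$ is a smooth quadric $\isom\PP^1\times\PP^1$, and the cokernels of the two matrices above are the two line bundles $\O(1,0)$ and $\O(0,1)$ (the two rulings), which are non-isomorphic; so $\eta$ is bijective there, and in particular birational.

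It remains to prove injectivity over the discriminant locus $\{\res(q)=0\}$, which I expect to be the main obstacle. There $p=0$: by Proposition \ref{functional_relation}, $\epsilon(\f)^2=\rho(\f)=\res(\det\f)=0$, and since $\Lambda^4V$ is one-dimensional this forces $\e(\f)=0$. Via the isomorphism $\N(4;2,2)\isom\M_{\PP^3}(m^2+3m+2)$ recalled in the introduction, a point of $\N(V;2,2)$ lying over $\langle q,0\rangle$ is the $S$-equivalence class of a semi-stable sheaf supported on $\{q=0\}$ with Hilbert polynomial $m^2+3m+2$, and one must show there is exactly one. If $q$ has rank $3$, then $\Coker(\f)$ is a rank-one maximal Cohen--Macaulay module on the integral quadric cone $\{q=0\}$ (maximal Cohen--Macaulay because $\Coker(\f)$ has projective dimension $1$ over $\PP^3$); the divisor class group of the cone is $\mathbb{Z}$, so such sheaves are the reflexive sheaves $\O(nL)$ with $L$ a ruling line, and comparing Hilbert polynomials pins down $n=1$. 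If $q$ has rank at most $2$, then $\f$ is strictly semi-stable, and the Jordan--Hölder factors of $\Coker(\f)$ are forced, by their supports and Hilbert polynomials, to be $\O_{\{\ell_1=0\}}$ and $\O_{\{\ell_2=0\}}$ when $q=\ell_1\ell_2$ with $\ell_1,\ell_2$ independent, and two copies of $\O_{\{\ell=0\}}$ when $q=\ell^2$; in each case the unique closed orbit over $\langle q,0\rangle$ is the one represented by the corresponding diagonal matrix. This shows $\eta$ is bijective; combined with the normality of $H$, it follows that $\eta$ is an isomorphism. The genuinely delicate inputs --- that $\e$ distinguishes the two rulings, and the uniqueness of the sheaf for each degenerate $q$ --- are precisely where the classification of sheaves on quadric surfaces enters.
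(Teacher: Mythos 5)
Your overall strategy coincides with the paper's: show that $H$ is normal via Serre's criterion (you compute the singular locus of $H$ inside the weighted projective space, the paper works with the affine cone $\hat H$ over $H$ --- the two computations are equivalent), and then reduce the statement to bijectivity of $\eta$ by Zariski's main theorem. Where you genuinely diverge is in proving bijectivity. The paper argues purely with matrices and proves only injectivity: if $\det(\f)$ is reducible, both morphisms are triangularizable and hence S-equivalent to $\diag(u,u')$; if $\det(\f)$ is irreducible, it projects $V$ onto a three-dimensional subspace $U$, uses $\N(U;2,2)\isom\PP(\Sym^2 U)$ to put both matrices in the common normal form with entries $x+aw,\ y+bw,\ z+cw,\ x+dw$, and reads off from $\det(\f_1)=\det(\f_2)$ and $\e(\f_1)=\e(\f_2)$ that $\f_1=\f_2$; surjectivity is then free, from completeness of $\N(V;2,2)$ and equality of dimensions. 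You instead get surjectivity from explicit preimages over the dense locus of smooth quadrics plus properness, and injectivity by classifying the cokernel sheaves through the identification $\N(4;2,2)\isom\M_{\PP^3}(m^2+3m+2)$: the two ruling line bundles on a smooth quadric, rank-one maximal Cohen--Macaulay sheaves on the quadric cone via its class group, and forced Jordan--H\"older factors (structure sheaves of the planes) in the rank $\le 2$ case. This is a valid and more structural route, and it explains conceptually why the fibers of $\det$ behave as they do, but it imports nontrivial sheaf-theoretic inputs (reflexive $=$ MCM on normal surfaces, the class group of the cone, the determination of JH factors by support cycles and Hilbert polynomials), whereas the paper's computation is elementary and self-contained. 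One step you should tighten: over the smooth-quadric locus you only exhibit two distinct preimages of the two points of $H$ lying above $\langle q\rangle$, which by itself does not bound the fiber of $\det$; to conclude injectivity there you need the same MCM/Hilbert-polynomial argument you sketch in the rank-three case (any $\f$ with $\det(\f)=q$ irreducible has cokernel a rank-one MCM sheaf on the quadric, hence one of the two rulings), or you can simply invoke Le Potier's double-cover statement quoted in the paper as Remark \ref{double_cover}.
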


\begin{proof}
The singular points of the cone $\hat{H} \subset \Sym^2 V \oplus \Lambda^4 V$ over $H$ are of the form $(q, 0)$,
where $q \in \Sym^2 V$ is a singular point of the vanishing locus of the resultant.
It follows that $\hat{H}$ is regular in codimension $1$.
From Serre's criterion of normality we deduce that $H$ is normal
(condition S2 is satisfied because $\hat{H}$ is a hypersurface in a smooth variety).
Normality is inherited by a good quotient, hence $H = (\hat{H} \setminus \{ 0 \})/\CC^*$ is normal, too.
In view of the Main Theorem of Zariski, it is enough to show that $\eta$ is bijective.
Since $\N(V; 2, 2)$ is complete, and since $\N(V; 2, 2)$ and $H$ are irreducible of the same dimension,
it is enough to show that $\eta$ is injective.

Assume that $\eta(\langle \f_1 \rangle) = \eta(\langle \f_2 \rangle)$. Varying $\f_1$ and $\f_2$ in their respective orbits,
we may assume that $\det(\f_1) = \det(\f_2)$ and $\e(\f_1) = \e(\f_2)$.
If $\det(\f_1)$ is reducible, say $\det(\f_1) = u u'$ for some $u, u' \in V$, then it is easy to see that
\[
\f_1 \sim \left[
\begin{array}{cc}
u & u_1 \\
0 & u'
\end{array}
\right], \qquad \f_2 \sim \left[
\begin{array}{cc}
u & u_2 \\
0 & u'
\end{array}
\right]
\]
for some $u_1, u_2 \in V$. But then $\langle \f_1 \rangle = \langle \f_2 \rangle = \langle \diag(u, u') \rangle$.
Assume now that $\det(\f_1)$ is irreducible.
There exists a vector $w \in V$ and a subspace $U \subset V$ such that $V = U \oplus \CC w$ and $\det(\pi(\f_1))$ is irreducible
(notations as at Proposition \ref{functional_relation}).
As mentioned at Proposition \ref{functional_relation}, we may choose a basis $\{ x, y, z \}$ of $U$ such that $\det(\pi(\f_1))= x^2 - yz$, forcing
\[
\pi(\f_1) \sim \pi(\f_2) \sim \left[
\begin{array}{cc}
x & y \\
z & x
\end{array}
\right].
\]
Thus, we may write
\[
\f_1 = \left[
\begin{array}{cc}
x + a_1 w & y + b_1 w \\
z + c_1 w & x + d_1 w
\end{array}
\right], \qquad \f_2 = \left[
\begin{array}{cc}
x + a_2 w & y + b_2 w \\
z + c_2 w & x + d_2 w
\end{array}
\right].
\]
The relation $\det(\f_1) = \det(\f_2)$ yields the relations $b_1 = b_2$, $c_1 = c_2$, $a_1 + d_1 = a_2 + d_2$.
The relation $\e(\f_1) = \e(\f_2)$ yields the relation $a_1 - d_1 = a_2 - d_2$.
We conclude that $\f_1 = \f_2$, hence $\langle \f_1 \rangle = \langle \f_2 \rangle$.
\end{proof}

\begin{remark}
\label{double_cover}
It was already known to Le Potier \cite[Remark 3.8]{asterisque} that the map
\[
\det \colon \N(V; 2, 2) \lra \PP(\Sym^2 V)
\]
is a double cover branched over the locus of singular quadratic surfaces.
\end{remark}

\noindent
In the sequel, we will use the abbreviations $\O(r, s) = \O_{\PP^1 \times \PP^1}(r, s)$, $\omega = \omega_{\PP^1 \times \PP^1}$,
and $\F^\dual = {\mathcal Ext}^1_{\O}(\F, \omega)$ for a sheaf $\F$ on $\PP^1 \times \PP^1$ of dimension $1$.
We quote below \cite[Proposition 3.8]{chung_moon}.

\begin{proposition}
\label{resolutions}
The sheaves $\F$ giving points in $\MM$ are precisely the sheaves having one of the following three types of resolution:
\begin{equation}
\label{type_0}
0 \lra 2\O(-1, -1) \overset{\f}{\lra} 2\O \lra \F \lra 0,
\end{equation}
\begin{equation}
\label{type_1}
0 \lra \O(-2, -1) \lra \O(0, 1) \lra \F \lra 0,
\end{equation}
\begin{equation}
\label{type_2}
0 \lra \O(-1, -2) \lra \O(1, 0) \lra \F \lra 0.
\end{equation}
\end{proposition}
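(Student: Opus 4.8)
The plan is to reconstruct $\F$ from its cohomology via the Beilinson spectral sequence on $X=\PP^1\times\PP^1$ associated with the exceptional collection $\O(-1,-1)$, $\O(-1,0)$, $\O(0,-1)$, $\O$, whose $E_1$-page has the two non-zero rows $q=0,1$ with entries $\H^q(\F(-1,-1))\tensor\O(-1,-1)$, then $(\H^q(\F(-1,0))\tensor\O(-1,0))\oplus(\H^q(\F(0,-1))\tensor\O(0,-1))$, then $\H^q(\F)\tensor\O$, and which converges to $\F$ in degree $0$ and to $0$ in the other degrees. A sheaf giving a point of $\MM$ is pure of dimension $1$ and satisfies $\chi(\F(a,b))=2+2(a+b)$, so the four relevant twists have Euler characteristics $-2$, $0$, $0$, $2$; the task is to pin down each pair $(\h^0,\h^1)$ using semi-stability, and the three resolution types will correspond to the three possibilities for $(\h^0(\F(-1,0)),\h^0(\F(0,-1)))$.

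First I would establish the cohomology estimates. A non-zero element of $\H^0(\F(-1,-1))$ gives a non-zero morphism $\O(1,1)\to\F$; its image is a non-zero subsheaf of the pure sheaf $\F$, and being a quotient of the line bundle $\O(1,1)$ with pure quotient it is of the form $\O(1,1)|_D$ for some effective divisor $D$, necessarily with $0\ne(a,b):=[D]\le(2,2)$. One computes $\chi(\O(1,1)|_D)=4-(2-a)(2-b)$ and multiplicity $a+b$, and checking $(4-(2-a)(2-b))/(a+b)>1/2$ in each admissible case shows this subsheaf would destabilize $\F$; hence $\H^0(\F(-1,-1))=0$ and $\h^1(\F(-1,-1))=2$. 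Dually I would use that $\F^\dual={\mathcal Ext}^1_\O(\F,\omega)$ is again semi-stable of multiplicity $4$, with $\chi(\F^\dual)=-2$ and $\H^0(\F^\dual)\isom\H^1(\F)^*$; a non-zero morphism $\O\to\F^\dual$ would give a subsheaf $\O_D$ of $\F^\dual$ with $\chi(\O_D)=a+b-ab$ and multiplicity $a+b$, and $(a+b-ab)/(a+b)>-1/2$ in every admissible case again contradicts semi-stability, so $\H^1(\F)=0$ and $\h^0(\F)=2$. Since $\chi(\F(-1,0))=\chi(\F(0,-1))=0$, the two middle twists have $\h^0=\h^1$.

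Next I would split into cases according to whether $\Hom(\O(1,0),\F)$ and $\Hom(\O(0,1),\F)$ vanish. If there is a non-zero morphism $\O(1,0)\to\F$, its image equals $\O(1,0)|_D$ with $0\ne(a,b):=[D]\le(2,2)$, of Euler characteristic $2-(2-a)(1-b)$ and multiplicity $a+b$; a short check shows that semi-stability forces $(a,b)=(2,2)$, so the image has the same multiplicity and Euler characteristic as $\F$, the morphism is surjective, and its kernel is $\O(1,0)(-D)=\O(-1,-2)$ --- that is, the resolution \eqref{type_2}. The symmetric argument with $\O(0,1)$ produces \eqref{type_1}. Finally, if $\Hom(\O(1,0),\F)=\Hom(\O(0,1),\F)=0$, so that $\H^0(\F(-1,0))=\H^0(\F(0,-1))=0$ and hence also $\H^1(\F(-1,0))=\H^1(\F(0,-1))=0$, the Beilinson $E_1$-page has only the two entries $2\O(-1,-1)$ and $2\O$, joined by one differential $\f$; convergence then forces $\f$ to be injective with cokernel $\F$, which is \eqref{type_0}.

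For the converse it suffices to note that the cokernel of a generically injective morphism of any of the three shapes has Hilbert polynomial $4m+2$ and first Chern class $(2,2)$ --- so a semi-stable such cokernel lies in $\MM$ --- and that each type is realized by stable sheaves (for generic data one obtains, respectively, a line bundle on a smooth $(2,2)$-curve, and the sheaves $\O_C(0,1)$ and $\O_C(1,0)$ on an integral $(2,2)$-curve $C$). I expect the main work to be, on the one hand, the three families of numerical inequalities above --- this is where semi-stability is genuinely used, and one must be careful with the bidegree conventions --- and, on the other hand, the $\Leftarrow$ direction: deciding exactly which morphisms in \eqref{type_0}--\eqref{type_2} yield a semi-stable cokernel. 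Once the cohomology of the four twists is known, the Beilinson spectral sequence is so degenerate that the resolutions themselves drop out with no further effort.
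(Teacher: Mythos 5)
Your forward implication (every $\F$ in $\MM$ has one of the three resolutions) is correct in outline, and it follows a genuinely different route from the paper's. The paper does not re-derive this implication by working directly with the cohomology of $\F$: it quotes \cite[Proposition 3.8]{chung_moon} and then sketches an elementary alternative built on the dual sheaf, namely the duality theorem \cite[Theorem 13]{rendiconti}, the argument of \cite[Lemma 20]{ballico_huh} (which produces $0 \to \mathcal{H} \to 2\O \to \F \to 0$ and then $\mathcal{H} \isom 2\O(-1,-1)$ once $\H^0(\F^\dual(1,0)) = 0 = \H^0(\F^\dual(0,1))$), and the identification of the sheaves $\E$ in $\MM'$ with $\H^0(\E) \neq 0$ as structure sheaves of $(2,2)$-curves. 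Your case split on $\Hom(\O(1,0),\F)$ and $\Hom(\O(0,1),\F)$ is equivalent to the paper's split on $\H^0(\F^\dual(1,0))$ and $\H^0(\F^\dual(0,1))$ (because $\chi(\F(-1,0)) = \chi(\F(0,-1)) = 0$, together with Serre duality and the local-to-global comparison), and your spectral-sequence degeneration when both groups vanish is exactly the computation the paper performs in Corollary \ref{generic_quotient}. I checked your three families of inequalities and the conclusion that only $(a,b)=(2,2)$ survives in the $\Hom(\O(1,0),\F)\neq 0$ case; they are right. The one ingredient you assert without justification is the semi-stability of $\F^\dual$, which is precisely \cite[Theorem 13]{rendiconti} and should be cited (the paper relies on it as well). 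Granting that, your treatment of this direction is more self-contained than the paper's, which leans on \cite{ballico_huh}.

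The genuine gap is the converse, which the word ``precisely'' requires: every sheaf admitting a resolution \eqref{type_0}, \eqref{type_1} or \eqref{type_2} must give a point of $\MM$. You replace this with the tautology that a semi-stable such cokernel lies in $\MM$, note that generic members are stable, and explicitly leave open ``deciding exactly which morphisms yield a semi-stable cokernel''; but the proposition asserts that all of them do, and this needs an argument (it is true, and is part of what \cite{chung_moon} prove). For \eqref{type_1} and \eqref{type_2} the cokernel is $\O_D(0,1)$, respectively $\O_D(1,0)$, for an arbitrary divisor $D$ of class $(2,2)$; every pure one-dimensional quotient of $\O_D(0,1)$ is $\O_{D'}(0,1)$ with $0 < D' \le D$, so the saturated subsheaves are the kernels of the surjections $\O_D(0,1) \to \O_{D'}(0,1)$, and a direct check of the seven possible classes of $D'$ gives stability. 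For \eqref{type_0}, purity of the cokernel is automatic (it has homological dimension one on a smooth surface), and semi-stability can be obtained, for instance, as follows: the cokernel is generated by its two sections, and the image of any single section is $\O_{D}$ with $D$ the divisor of $\det(\f)/d$, where $d$ is the greatest common divisor of the entries of the row of $\f$ complementary to a lift of the section, so $D$ has class at least $(1,1)$; a destabilizing subsheaf $\F'$ would have $\chi(\F') \ge 1$, and then either $\chi(\F') \ge 2$, forcing $\H^0(\F') = \H^0(\F)$ and hence $\F' = \F$ by global generation, or $\chi(\F') = 1$ and $\F'$ has multiplicity one, contradicting the lower bound on the class of images of sections. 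Without some verification of this kind, or an explicit appeal to \cite{chung_moon} for the converse as well, the ``precisely'' in the statement is not established.
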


\noindent
This proposition was proved in \cite{chung_moon} by the wall-crossing method, however, it was also nearly obtained in \cite{ballico_huh}.
At \cite[Lemma 20]{ballico_huh} it is mistakenly claimed that all sheaves in $\MM$ have resolution (\ref{type_0}).
At a closer inspection, the argument of \cite[Lemma 20]{ballico_huh} shows that the sheaves in $\MM$ satisfying the conditions
$\H^0(\F^\dual(1, 0)) = 0$ and $\H^0(\F^\dual(0, 1)) = 0$ are precisely the sheaves given by resolution (\ref{type_0}).
Indeed, the exact sequence (50) in \cite{ballico_huh} reads
\begin{equation}
\label{ballico_huh_sequence}
0 \lra {\mathcal H} \lra 2\O \lra \F \lra 0,
\end{equation}
where ${\mathcal H}$ is a locally free sheaf of rank $2$ and determinant $\omega$.
Dualizing this sequence, we get the exact sequence
\begin{equation}
0 \lra 2\O(-2, -2) \lra
{\mathcal H}^\dual \isom {\mathcal H}^* \tensor \omega \isom {\mathcal H} \tensor \det({\mathcal H})^* \tensor \omega \isom {\mathcal H} \lra
\F^\dual \lra 0.
\end{equation}
From this we get the relations
\[
\h^1({\mathcal H}(1, 0)) = \h^0(\F^\dual(1, 0)) \quad \text{and} \quad \h^1({\mathcal H}(0, 1)) = \h^0(\F^\dual(0, 1)).
\]
The vanishing of $\H^1({\mathcal H}(1, 0))$ and $\H^1({\mathcal H}(0, 1))$
implies that ${\mathcal H} \isom 2\O(-1, -1)$, in which case (\ref{ballico_huh_sequence}) yields resolution (\ref{type_0}).

According to \cite[Theorem 13]{rendiconti}, if $\F$ gives a point in $\MM$, then $\F^\dual(0, 1)$ and $\F^\dual(1, 0)$ give points in the moduli space
$\MM'$ of semi-stable sheaves on $\PP^1 \times \PP^1$ having Hilbert polynomial $P(m) = 4m$ and first Chern class $c_1 = (2, 2)$.
We claim that the sheaves $\E$ giving points in $\MM'$ and satisfying the condition $\H^0(\E) \neq 0$ are precisely the structure sheaves
of curves $E \subset \PP^1 \times \PP^1$ of type $(2, 2)$.
By the argument of \cite[Lemma 9]{ballico_huh}, $\O_E$ gives a stable point in $\MM'$.
Conversely, if $\E$ gives a point in $\MM'$ and $\H^0(\E) \neq 0$, then, by the argument of \cite[Proposition 2.1.3]{dedicata},
there is an injective morphism $\O_C \to \E$ for a curve $C \subset \PP^1 \times \PP^1$.
If $C$ did not have type $(2, 2)$, then the semi-stability of $\E$ would get contradicted.
Thus, $C$ has type $(2, 2)$ and, comparing Hilbert polynomials, we see that $\O_C \isom \E$.
In conclusion, if $\H^0(\F^\dual(0, 1)) \neq 0$, then $\F \isom \O_E(0, -1)^\dual \isom \O_E(0, 1)$, hence $\F$ has resolution (\ref{type_1}).
If $\H^0(\F^\dual(1, 0)) \neq 0$, then $\F \isom \O_E(-1, 0)^\dual \isom \O_E(1, 0)$, hence $\F$ has resolution (\ref{type_2}).

We denote by $\MM_0, \MM_1, \MM_2 \subset \MM$ the subsets of sheaves given by resolution (\ref{type_0}), (\ref{type_1}), respectively, (\ref{type_2}).
Clearly, $\MM_0$ is open and $\MM_1$, $\MM_2$ are divisors isomorphic to $\PP^8$.
Let $\Hom(2\O(-1,-1), 2\O)_0$ denote the subset of injective morphisms.

\begin{corollary}
\label{generic_quotient}
The canonical map from below is a good quotient modulo $G$:
\[
\gamma \colon \Hom(2\O(-1,-1), 2\O)_0 \lra \MM_0, \qquad \gamma(\f) = \langle \Coker(\f) \rangle.
\]
\end{corollary}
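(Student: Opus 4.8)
The plan is to deduce the statement from the good quotient
\[
\Hom(2\O_{\PP^3}(-1), 2\O_{\PP^3})^\sst \lra \M_{\PP^3}(m^2+3m+2)
\]
of \cite[Corollary 3.7]{asterisque} and \cite[Lemma 5.2]{chung_moon}. Put $V = \H^0(\O(1,1))$, a space of dimension $4$, so that $\Hom(2\O(-1,-1), 2\O) = \Hom(\CC^2, \CC^2 \tensor V)$; once we fix a linear isomorphism $V \isom \H^0(\O_{\PP^3}(1))$, this is literally the same $G$-variety that underlies the $\PP^3$ picture. I would then identify $\Hom(2\O(-1,-1), 2\O)_0$ with a $G$-saturated open subset of $\Hom(2\O_{\PP^3}(-1), 2\O_{\PP^3})^\sst$ and transport the quotient.

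First I would settle well-definedness and canonicity. For injective $\f$, $\Coker(\f)$ is pure of dimension $1$ with Hilbert polynomial $4m+2$ and $c_1 = (2,2)$, and it is Gieseker semi-stable: a destabilizing subsheaf or quotient would be supported on a component of $\{\det(\f)=0\}$, and the resolution $0 \to 2\O(-1,-1) \overset{\f}{\lra} 2\O \to \Coker(\f) \to 0$ forces the Euler characteristics of the corresponding restrictions of $\Coker(\f)$ to be large enough for the numerical inequalities to hold (this is the converse of Proposition \ref{resolutions}, carried out in \cite{ballico_huh}). Thus $\gamma$ lands in $\MM_0$ and is surjective. Moreover resolution (\ref{type_0}) is canonical: since $\H^\bullet(\O(-1,-1)) = 0$, any $\F$ in $\MM_0$ has $\h^0(\F)=2$, $\h^1(\F)=\h^2(\F)=0$, the surjection in (\ref{type_0}) is the evaluation morphism $\H^0(\F)\tensor\O \to \F$, and its kernel is isomorphic to $2\O(-1,-1)$; together with $\Hom(2\O, 2\O(-1,-1))=0$ (because $\H^0(\O(-1,-1))=0$) this shows that two injective morphisms with isomorphic cokernel differ by the $G$-action. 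In particular the fibres of $\gamma$ over the stable locus are single $G$-orbits.

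Next I would compare the open loci. A morphism $\f$ is injective precisely when $\det(\f)\neq 0$ in $\H^0(\O(2,2))$, whereas, as recalled before Proposition \ref{hypersurface}, it is semi-stable in the sense of King precisely when $\det(\f)\neq 0$ in $\Sym^2 V$. The multiplication map $\Sym^2 V \to \H^0(\O(2,2))$ is surjective with one-dimensional kernel spanned by the quadratic form $Q_0$ defining the Segre embedding $\PP^1\times\PP^1 \subset \PP^3$, so $\Hom(2\O(-1,-1),2\O)_0 = \Hom(2\O_{\PP^3}(-1),2\O_{\PP^3})^\sst \setminus Z$ with $Z = \{\f : \det(\f)\in\CC Q_0\setminus\{0\}\}$. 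As $\det$ is semi-invariant, $Z$ is $G$-invariant, and it is $G$-saturated for the good quotient onto $\N(V;2,2)$: by Remark \ref{double_cover} the map $\det \colon \N(V;2,2)\to\PP(\Sym^2 V)$ is a double cover unbranched over the smooth quadric $\langle Q_0\rangle$, so its fibre there consists of two points $p_1,p_2$, and $Z$ is the full preimage of $\{p_1,p_2\}$. A good quotient restricted over a saturated open subset is again a good quotient, hence
\[
\Hom(2\O(-1,-1),2\O)_0 \lra \N(V;2,2)\setminus\{p_1,p_2\}
\]
is a good quotient modulo $G$. Being $G$-invariant, $\gamma$ factors through it, $\gamma = \phi\circ q$ with $\phi \colon \N(V;2,2)\setminus\{p_1,p_2\}\to\MM_0$. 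Then $\phi$ is surjective, and it is injective because $S$-equivalence of the cokernels agrees with the relation ``orbit closures meet'': one inclusion is formal, and for the other one degenerates a semi-stable sheaf in $\MM_0$ to its polystable representative, which again lies in $\MM_0$ (its Jordan--Hölder factors are structure sheaves of curves of type $(1,1)$, each carrying a Koszul resolution of type (\ref{type_0})), and lifts this degeneration to a degeneration of matrices using that resolution (\ref{type_0}) propagates in families. Finally, $\N(V;2,2)$ is normal (proof of Proposition \ref{hypersurface}) and $\MM_0$ is normal, since $\operatorname{Ext}^2_\O(\F,\F)\isom\Hom(\F,\F(-2,-2))^*=0$ for every semi-stable $\F$, so the parameter scheme of the Geometric Invariant Theory construction of $\MM$ is smooth and its good quotient is normal; by the Main Theorem of Zariski $\phi$ is an isomorphism, and $\gamma$ is a good quotient modulo $G$.

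The step I expect to be the main obstacle is the identification of the two equivalence relations, i.e. the injectivity of $\phi$: it requires knowing that the strictly semi-stable sheaves of $\MM_0$, together with their polystable representatives, still carry resolution (\ref{type_0}), and that this resolution behaves well in families so that degenerations on the sheaf side lift to the matrix side. The other ingredients are either formal or routine cohomology. An alternative avoiding the $\PP^3$ model is to run the Geometric Invariant Theory argument of \cite[Corollary 3.7]{asterisque} directly over $\PP^1\times\PP^1$, the requisite canonical resolution being furnished by Proposition \ref{resolutions}; the same vanishings $\H^\bullet(\O(-1,-1))=0$ and $\Hom(2\O,2\O(-1,-1))=0$ are the crux there as well.
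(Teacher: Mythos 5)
Your proposal is correct in substance but follows a genuinely different route from the paper. The paper proves the corollary intrinsically: for $\F$ in $\MM_0$ it computes the cohomology groups $\H^i(\F)$, $\H^i(\F(-1,-1))$, $\H^i(\F(0,-1))$, $\H^i(\F(-1,0))$ and runs Buchdahl's Beilinson-type spectral sequence to show that resolution (\ref{type_0}) arises canonically from $\F$; it then upgrades this to local flat families (by the argument of \cite[Theorem 3.1.6]{dedicata}) to conclude that $\gamma$ is a categorical quotient, and identifies it with the good quotient by uniqueness. You instead transport the known GIT structure: you observe that $\Hom(2\O(-1,-1),2\O)_0$ is the King-semistable locus minus the saturated preimage of the two points over $\langle xw-yz\rangle$ (using Remark \ref{double_cover}), so its good quotient is $\N(V;2,2)\setminus\{\nu_1,\nu_2\}$, factor $\gamma$ through it, and prove the induced map to $\MM_0$ is bijective, concluding by normality of $\MM_0$ and Zariski's main theorem. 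What your route buys is that it simultaneously establishes the isomorphism $\MM_0\isom\N(V;2,2)\setminus\{\nu_1,\nu_2\}$, which the paper only extracts afterwards when defining $\beta_0$; what it costs is the extra burden of matching the two S-equivalence relations and proving normality of $\MM_0$, neither of which the paper's argument needs. Note also that your "main obstacle" — the validity of resolution (\ref{type_0}) in flat families, needed to lift the degeneration to the polystable representative to the matrix side — is exactly the ingredient the paper invokes via \cite[Theorem 3.1.6]{dedicata}, so the two proofs share their deepest step; your evaluation-map argument for canonicity of (\ref{type_0}) (via $\h^0(\F)=2$, $\h^1(\F)=0$, $\H^0(\O(-1,-1))=0$) is a clean substitute for the spectral-sequence computation over a point. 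Two small points should be shored up: the assertion that the Jordan--H\"older factors of a strictly semistable sheaf in $\MM$ are structure sheaves of $(1,1)$-curves needs a line ruling out semistable sheaves with Hilbert polynomial $2m+1$ and $c_1=(2,0)$ or $(0,2)$ (a short computation on a double line or a pair of lines of one ruling shows none exist); and the semistability of $\Coker(\f)$ for every injective $\f$, which you re-derive vaguely from \cite{ballico_huh}, is in fact already contained in the "precisely" of Proposition \ref{resolutions}, so you may simply cite it.
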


\begin{proof}
According to \cite[Lemma 1]{buchdahl}, for any coherent sheaf $\F$ on $\PP^1 \times \PP^1$ there is a spectral sequence
converging to $\F$ in degree zero and to $0$ in degrees different from zero, similar to the Beilinson spectral sequence.
Its first level $\operatorname{E}_1^{ij}$ is given by
\[
\operatorname{E}_1^{ij} = 0 \quad \text{if $i > 0$ or $i < -2$},
\]
\[
\operatorname{E}_1^{0j} = \H^j(\F) \tensor \O, \quad \operatorname{E}_1^{-2,j} = \H^j(\F(-1,-1)) \tensor \O(-1, -1),
\]
and by the exact sequences
\[
\H^j(\F(0, -1)) \tensor \O(0, -1) \lra \operatorname{E}_1^{-1,j} \lra \H^j(\F(-1, 0)) \tensor \O(-1, 0).
\]
If $\F$ gives a point in $\MM_0$, then
\[
\H^0(\F) \isom \CC^2, \quad \H^1(\F) = 0, \quad \H^0(\F(-1,-1)) = 0, \quad \H^1(\F(-1, -1)) \isom \CC^2,
\]
\[
\H^0(\F(0, -1)) = 0, \quad \H^1(\F(0, -1)) = 0, \quad \H^0(\F(-1, 0)) = 0, \quad \H^1(\F(-1, 0)) = 0.
\]
Thus, $\operatorname{E}_1$ has only two non-zero terms: $\operatorname{E}_1^{-2, 1} = 2 \O(-1, -1)$ and $\operatorname{E}_1^{0,0} = 2\O$.
The relevant part of $\operatorname{E}_2$ is represented in the following table:
\[
\xymatrix
{
2 \O(-1, -1) \ar[drr]^-{\f} & 0 & 0 \\
0 & 0 & 2\O
}
\]
The sequence degenerates at $\operatorname{E}_3$, hence $\f$ is injective and $\Coker(\f) \isom \F$.
This shows that resolution (\ref{type_0}) can be obtained from the Beilinson spectral sequence of $\F$.
Arguing as at \cite[Theorem 3.1.6]{dedicata}, we can see that resolution (\ref{type_0})
can be obtained for local flat families of sheaves in $\MM_0$, hence $\gamma$ is a categorical quotient.
By the uniqueness of the categorical quotient, we deduce that $\gamma$ is a good quotient map.
\end{proof}

\noindent
We fix vector spaces $V_1$ and $V_2$ over $\CC$ of dimension $2$ and we make the identifications
\[
\PP^1 \times \PP^1 = \PP(V_1) \times \PP(V_2), \quad \H^0(\O(r, s)) = \Sym^r V_1^* \tensor \Sym^s V_2^*, \quad V = V_1^* \tensor V_2^*.
\]
Let
\[
\WW \subset \Hom \big(2\O(-1, -1) \oplus \O(-1, 0) \oplus \O(0, -1), \ \O(-1, 0) \oplus \O(0, -1) \oplus 2\O \big)
\]
be the open subset of injective morphisms $\psi$ for which $\Coker(\psi)$ is Gieseker semi-stable.
We represent $\psi$ by a matrix
\[
\psi = \left[
\begin{array}{c:c}
\psi_{11} & \psi_{12} \\
\hdashline
\psi_{21} & \psi_{22}
\end{array}
\right] = \left[
\begin{array}{cc:cc}
1 \tensor u_{12} & 1 \tensor v_{12} & a_1 & 0 \\
u_{11} \tensor 1 & v_{11} \tensor 1 & 0 & a_2 \\
\hdashline
f_{11} & f_{12} & u_{21} \tensor 1 & 1 \tensor u_{22} \\
f_{21} & f_{22} & v_{21} \tensor 1 & 1 \tensor v_{22}
\end{array}
\right],
\]
where $a_1, a_2 \in \CC$, $u_{ij}, v_{ij} \in V_j^*$, $f_{ij} \in V$.
The algebraic group
\[
\GG = \big( \Aut(2\O(-1, -1) \oplus \O(-1, 0) \oplus \O(0, -1)) \times \Aut(\O(-1, 0) \oplus \O(0, -1) \oplus 2\O) \big)/\CC^*
\]
acts on $\WW$ by conjugation. We represent elements of $\GG$ by pairs $(g, h)$, where
\[
g = \left[
\begin{array}{cc}
g_{11} & 0 \\
g_{21} & g_{22}
\end{array}
\right], \qquad h = \left[
\begin{array}{cc}
h_{11} & 0 \\
h_{21} & h_{22}
\end{array}
\right],
\]
$g_{11} \in \Aut(2\O(-1, -1))$, $h_{22} \in \Aut(2\O)$, etc.

\begin{proposition}
\label{global_quotient}
The canonical map $\theta \colon \WW \to \MM$, $\theta(\psi) = \langle \Coker(\psi) \rangle$ is a good quotient modulo $\GG$.
\end{proposition}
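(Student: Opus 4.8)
The plan is to imitate the proof of Corollary \ref{generic_quotient}. We will show that $\theta$ is surjective, that its fibres are precisely the $\GG$-orbits, and that the resolution
\[
0 \lra 2\O(-1, -1) \oplus \O(-1, 0) \oplus \O(0, -1) \overset{\psi}{\lra} \O(-1, 0) \oplus \O(0, -1) \oplus 2\O \lra \F \lra 0
\]
can be obtained for local flat families of sheaves from $\MM$. The last property makes $\theta$ a categorical quotient; then, by the uniqueness of categorical quotients and the existence of a good quotient $\WW /\!\!/ \GG$ (cf.\ \cite{dedicata}), the map $\theta$ is a good quotient modulo $\GG$.

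For surjectivity we run through the three types of resolution of Proposition \ref{resolutions}. If $\F \in \MM_0$, Corollary \ref{generic_quotient} furnishes a resolution $0 \to 2\O(-1, -1) \to 2\O \to \F \to 0$; taking its direct sum with the identity endomorphism of $\O(-1, 0) \oplus \O(0, -1)$ produces a $\psi \in \WW$ with $\Coker(\psi) \isom \F$, since the extra summands cancel in the cokernel. If $\F \in \MM_1$, we begin with resolution (\ref{type_1}) and use the Euler sequences $0 \to \O(0, -1) \to 2\O \to \O(0, 1) \to 0$ and $0 \to \O(-2, -1) \to 2\O(-1, -1) \to \O(0, -1) \to 0$ (the second being the first, on the other ruling, twisted by $\O(-1, -1)$) to replace $\O(0, 1)$ and $\O(-2, -1)$ by the sheaves $\O(-1, -1)$, $\O(0, -1)$, $2\O$; a horseshoe-type argument, legitimate because the relevant $\operatorname{Ext}^1$-groups vanish, produces a resolution $0 \to 2\O(-1, -1) \oplus \O(0, -1) \to \O(0, -1) \oplus 2\O \to \F \to 0$, to which we add the identity of $\O(-1, 0)$ to land in $\WW$. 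The case $\F \in \MM_2$ follows by interchanging the two rulings of $\PP^1 \times \PP^1$. Hence $\theta$ is surjective.

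For the fibres, we first record that, for any two of the line bundles $\O(-1, -1)$, $\O(-1, 0)$, $\O(0, -1)$, $\O$ on $\PP^1 \times \PP^1$, all groups $\operatorname{Ext}^i$ with $i \ge 1$ vanish, the relevant twists $\O(a, b)$ having $-1 \le a, b \le 1$. Consequently an isomorphism $\Coker(\psi) \isom \Coker(\psi')$ between the cokernels of two elements of $\WW$ lifts to a morphism of the corresponding two-term complexes; a standard argument --- lifting also the inverse isomorphism and working with the filtration relative to which $\GG$ is block lower triangular --- shows that such a lift can be chosen to be an isomorphism of complexes, so that $\psi$ and $\psi'$ lie in the same $\GG$-orbit. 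For the family version one argues as in \cite[Theorem 3.1.6]{dedicata}: over the base of a local flat family of sheaves from $\MM$, the terms of the resolution above are produced by locally free direct-image sheaves and the differential descends, and, crucially, the displayed resolution has the same shape for every $\F \in \MM$, so the construction is uniform. Thus $\theta$ is a categorical quotient, hence a good quotient.

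The main obstacle is the family construction. The cohomology groups of the twists $\F(0, -1)$ and $\F(-1, 0)$, which control the Beilinson-type spectral sequence of \cite{buchdahl}, jump between the strata $\MM_0$, $\MM_1$, $\MM_2$, so the minimal resolution degenerates and the one above cannot simply be read off a single spectral sequence; one must verify that the padded resolution of the fixed shape displayed above nonetheless varies algebraically over the base. A secondary difficulty is the bookkeeping in the fibre step, where one must check that the iterated lifting of an isomorphism between cokernels terminates in an element of the non-reductive group $\GG$, and not merely in a morphism of complexes.
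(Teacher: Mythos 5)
The central gap is your appeal to ``the existence of a good quotient $\WW /\!\!/ \GG$ (cf.\ \cite{dedicata})''. In Corollary \ref{generic_quotient} the categorical-quotient-plus-uniqueness argument works because the group there is reductive, so King's GIT construction already provides a good quotient of the space of injective matrices, and the categorical quotient can then be identified with it. Here $\GG$ is \emph{not} reductive: the blocks $g_{21}$, $h_{21}$ generate a unipotent radical, so no general theorem guarantees a good quotient of $\WW$ by $\GG$, and its existence is essentially the content of the proposition — assuming it is circular. The paper's proof is built precisely to avoid this: on the open set $\WW_0$ where $\psi_{12}$ is invertible, it factors $\theta$ through the map $\alpha(\psi) = \psi_{21} - \psi_{22}\psi_{12}^{-1}\psi_{11}$, which is a geometric quotient modulo the normal subgroup $\GG_0$, followed by the known good quotient $\gamma$ of Corollary \ref{generic_quotient} modulo $\GG/\GG_0$; over the stable locus it checks that the fibres are single $\GG$-orbits and applies \cite[Theorem 4.2]{popov_vinberg}, using that $\MM^\st$ is normal; and it concludes by gluing, since $\MM = \MM_0 \cup \MM^\st$. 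Your proposal contains no substitute for this local construction.

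Two further problems. First, your plan to show that ``the fibres of $\theta$ are precisely the $\GG$-orbits'' cannot succeed: $\theta(\psi)$ records only the S-equivalence class of $\Coker(\psi)$, so over a strictly semi-stable (polystable) point of $\MM_0$ the fibre contains the orbits of matrices whose cokernels are the various extensions with the given graded object, and these form more than one orbit; this is exactly why the paper asserts fibres-are-orbits only over $\MM^\st$ and over $\MM_1$, $\MM_2$. (Also, even for isomorphic cokernels, a lift of an isomorphism to the two-term complexes need not be an isomorphism, because for $\psi \in \WW_0$ the complex has cancelling direct summands $\O(-1,0) \oplus \O(0,-1)$; this can be repaired, but it does not address S-equivalence.) Second, the step you yourself identify as the main obstacle — producing the padded resolution in local flat families across the strata, where the Beilinson-type cohomology jumps — is left unresolved, and it is the crux of any proof along your lines; the paper sidesteps it by doing the family argument only for resolution (\ref{type_0}) over $\MM_0$ and treating $\MM_1$, $\MM_2$ pointwise via the snake-lemma computation and the Popov--Vinberg argument.
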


\begin{proof}
Let $\WW_0 \subset \WW$ be the open subset given by the condition that $\psi_{12}$ be invertible.
Concretely, $\WW_0$ is the set of morphisms $\psi$ such that $\psi_{12}$ is invertible and $\alpha(\psi) = \psi_{21} - \psi_{22} \psi_{12}^{-1} \psi_{11}$
is injective. In view of Proposition \ref{resolutions}, $\MM_0 = \theta(\WW_0)$.
The restricted map $\theta_0 \colon \WW_0 \to \MM_0$ is the composition
\[
\WW_0 \overset{\alpha}{\lra} \Hom(2\O(-1, -1), 2\O)_0 \overset{\gamma}{\lra} \MM_0,
\]
where $\gamma$ is the good quotient map from Corollary \ref{generic_quotient}.
Let $\GG_0 \unlhd \GG$ be the closed normal subgroup given by the conditions $g_{11} = c I$, $h_{22} = c I$, $c \in \CC^*$.
We have the relation $\alpha(h \psi g^{-1}) = h_{22}^{} \alpha(\psi) g_{11}^{-1}$, hence $\alpha$ is constant on the orbits of $\GG_0$.
Since any $\psi \in \WW_0$ is equivalent to
\[
\left[
\begin{array}{cc}
0 & I \\
\alpha(\psi) & 0
\end{array}
\right],
\]
it follows that the fibers of $\alpha$ are precisely the $\GG_0$-orbits, and that $\alpha$ has a section.
We deduce that $\alpha$ is a geometric quotient modulo $\GG_0$.
Since $\gamma$ is a good quotient modulo $\GG/\GG_0$, we conclude that $\theta_0$ is a good quotient modulo $\GG$.
Let $\MM^\st_0 \subset \MM_0$ be the subset of stable points.
Since $\gamma^{-1}(\MM_0^\st) \to \MM_0^\st$ is a geometric quotient modulo $\GG/\GG_0$,
we deduce that $\theta^{-1}(\MM_0^\st) \to \MM_0^\st$ is a geometric quotient modulo $\GG$.

Assume now that $\psi \in \WW \setminus \WW_0$.
Denote $\F = \Coker(\psi)$.
Then $\psi_{12} \neq 0$, otherwise $\Coker(\psi_{22})$ would be a destabilizing subsheaf of $\F$.
Thus, $\WW \setminus \WW_0$ is the disjoint union of two subsets $\WW_1$ and $\WW_2$.
The former is given by the relations $a_1 \neq 0$, $a_2 = 0$; the latter is given by the relations $a_1 = 0$, $a_2 \neq 0$.
Assume that $\psi \in \WW_1$. Then $u_{11}$, $v_{11}$ are linearly independent,
otherwise $\F$ would have a destabilizing quotient sheaf of slope zero.
Likewise, $u_{22}$, $v_{22}$ are linearly independent, otherwise $\F$ would have a destabilizing subsheaf of slope $1$.
Consider the morphism
\[
\xi \in \Hom(2\O(-1,-1) \oplus \O(0, -1), \ \O(0, -1) \oplus 2\O),
\]
\[
\xi = \left[
\begin{array}{ccc}
u_{11} \tensor 1 & v_{11} \tensor 1 & 0 \\
f_{11} - a_1^{-1} u_{21} \tensor u_{12} & f_{12} - a_1^{-1} u_{21} \tensor v_{12} & 1 \tensor u_{22} \\
f_{21} - a_1^{-1} v_{21} \tensor u_{12} & f_{22} - a_1^{-1} v_{21} \tensor v_{12} & 1 \tensor v_{22}
\end{array}
\right].
\]
Clearly, $\F \isom \Coker(\xi)$. Applying the snake lemma to the exact diagram
\[
{\small
\xymatrix@C+=28pt
{
& 0 \ar[d] & 0 \ar[d]
\\
0 \ar[r] & \O(0, -1) \ar[d] \ar[r]^-{\tiny \left[ \!\!\! \begin{array}{c} 1 \tensor u_{22} \\ 1 \tensor v_{22} \end{array} \!\!\! \right]} &
   2\O \ar[d] \ar@{->>}[rr]^-{\tiny \left[ \!\!\! \begin{array}{cc} -1 \tensor v_{22} & \!\!\! 1 \tensor u_{22} \end{array} \!\!\! \right]} & & \O(0, 1)
\\
0 \ar[r] & 2\O(-1, -1) \oplus \O(0, -1) \ar[d] \ar[r]^-{\xi} & \O(0, -1) \oplus 2\O \ar@{->>}[rr] \ar[d] & & \F
\\
\O(-2, -1) \ar@{^{(}->}[r]^-{\tiny \left[ \!\!\! \begin{array}{r} -v_{11} \tensor 1 \\ u_{11} \tensor 1 \end{array} \!\!\! \right]} &
2\O(-1, -1) \ar[d] \ar[r]^-{\tiny \left[ \!\!\! \begin{array}{cc} u_{11} \tensor 1 & \!\! v_{11} \tensor 1 \end{array} \!\!\! \right]} & \O(0, -1) \ar[rr] \ar[d] & & 0
\\
& 0 & 0
}}
\]
we obtain resolution (\ref{type_1}). This shows that $\theta(\WW_1) \subset \MM_1$. It is now easy to see that the restricted map
$\WW_1 \to \MM_1$ is surjective and that its fibers are precisely the $\GG$-orbits.
By symmetry, the same is true of the restricted map $\WW_2 \to \MM_2$.

Let $\MM^\st \subset \MM$ be the open subset of stable points and $\WW^\st = \theta^{-1}(\MM^\st)$.
We have proved above that the fibers of the restricted map $\theta^\st \colon \WW^\st \to \MM^\st$ are precisely the $\GG$-orbits.
Since $\MM^\st$ is normal (being smooth),
we can apply \cite[Theorem 4.2]{popov_vinberg} to deduce that $\theta^\st$ is a geometric quotient modulo $\GG$.
Finally, since $\MM = \MM_0 \cup \MM^\st$, we deduce that $\theta$ is a good quotient map.
\end{proof}

\noindent
Choose bases $\{ u_1, v_1 \}$ of $V_1^*$ and $\{ u_2, v_2 \}$ of $V_2^*$.
Then $x = u_1 \tensor u_2$, $y = v_1 \tensor u_2$, $z = u_1 \tensor v_2$, $w = v_1 \tensor v_2$ form a basis of $V$.
An easy calculation shows that the set of injective morphisms
\[
\Hom(2\O(-1, -1), 2\O)_0 \subset \Hom(\CC^2, \CC^2 \tensor V)
\]
is the subset of matrices whose determinant is not a multiple of $xw - yz$.
Thus, 
\[
\Hom(2\O(-1, -1), 2\O)_0/\!\!/G \isom \N(V; 2, 2) \setminus {\det}^{-1} \{ \langle xw - yz \rangle \}.
\]
According to Remark \ref{double_cover}, $\det^{-1} \{ \langle xw - yz \rangle \}$ consists of two points $\nu_1$ and $\nu_2$,
where $\epsilon(\nu_1) = 1$, $\epsilon(\nu_2) = -1$.
We saw at Corollary \ref{generic_quotient} that $\gamma$ induces an isomorphism
\[
\Hom(2\O(-1, -1), 2\O)_0/\!\!/G \lra \MM_0.
\]
The inverse of this isomorphism is denoted by
\[
\beta_0 \colon \MM_0 \lra \N(V; 2, 2) \setminus \{ \nu_1, \nu_2 \}.
\]
It is natural to ask whether $\MM$ is the blow-up of $\N(V; 2, 2)$ at $\nu_1$ and $\nu_2$.
This is, indeed, one of the main results in \cite{chung_moon}, where a blowing-down map $\beta \colon \MM \to \N(V; 2, 2)$ is constructed
via Fourier-Mukai transforms of sheaves, in view of the identification of $\N(V; 2, 2)$ with $\M_{\PP^3}(m^2 + 3m + 2)$.
We give below an alternate construction.

\begin{proposition}
\label{blowing_down}
The map $\beta_0$ extends to a blowing-down map $\beta \colon \MM \to \N(V; 2, 2)$ with exceptional divisor $\MM_1 \cup \MM_2$
and blowing-up locus $\{ \nu_1, \nu_2 \}$.
\end{proposition}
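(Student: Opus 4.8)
The plan is to write $\beta$ down explicitly on the global presentation $\theta \colon \WW \to \MM$ of Proposition \ref{global_quotient}, and then to recognize the resulting morphism as a blow-up by means of the universal property together with a finiteness argument. Recall that on $\WW_0$ one has $\alpha(\psi) = \psi_{21} - \psi_{22}\psi_{12}^{-1}\psi_{11} \in \Hom(\CC^2, \CC^2 \tensor V)$ with $\langle \Coker \alpha(\psi) \rangle = \theta(\psi)$. Since $\psi_{12} = \diag(a_1, a_2)$ for every $\psi \in \WW$, the first step is to observe that the formula
\[
\widetilde\alpha(\psi) = a_1 a_2\,\psi_{21} - \psi_{22}\,\diag(a_2, a_1)\,\psi_{11}
\]
(with $\diag(a_2, a_1)$ the adjugate of $\psi_{12}$) defines a morphism $\widetilde\alpha \colon \WW \to \Hom(\CC^2, \CC^2 \tensor V)$, polynomial in the entries of $\psi$, which restricts to $a_1 a_2\,\alpha$ on $\WW_0$. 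One then checks that $(\det \widetilde\alpha(\psi),\, \e(\widetilde\alpha(\psi))) \ne 0$ for every $\psi \in \WW$. On $\WW_0$ this holds because $\alpha(\psi)$ is injective, so $\det \widetilde\alpha(\psi) = (a_1 a_2)^2 \det \alpha(\psi) \ne 0$. On $\WW_1$ (where $a_1 \ne 0$, $a_2 = 0$) a direct computation gives
\[
\widetilde\alpha(\psi) = -a_1 \left[ \begin{array}{cc} u_{11} \tensor u_{22} & v_{11} \tensor u_{22} \\ u_{11} \tensor v_{22} & v_{11} \tensor v_{22} \end{array} \right],
\]
whence $\det \widetilde\alpha(\psi) = a_1^2 (u_{11} \wedge v_{11}) \tensor (u_{22} \wedge v_{22})$, a non-zero multiple of $xw - yz$, since $u_{11}, v_{11}$ and $u_{22}, v_{22}$ are linearly independent by the proof of Proposition \ref{global_quotient}; the case of $\WW_2$ is symmetric. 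Hence $\psi \mapsto \langle \det \widetilde\alpha(\psi),\, \e(\widetilde\alpha(\psi)) \rangle$ is a morphism $\WW \to \PP$; for $\f = \widetilde\alpha(\psi)$ the relation $\epsilon^2 = \rho$ of Proposition \ref{functional_relation} says precisely that $(\det \f, \e(\f))$ lies on $\hat{H}$, so the morphism lands in $H$, and it is constant on the fibers of $\theta$ because on the dense open $\WW_0$ it coincides with $\eta \circ \beta_0 \circ \theta_0$. As $\theta$ is a categorical quotient it descends to $\bar\beta \colon \MM \to H$, and one sets $\beta = \eta^{-1} \circ \bar\beta$, legitimate by Proposition \ref{hypersurface}. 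One has $\beta|_{\MM_0} = \beta_0$, since for $\psi \in \WW_0$, $\langle \det \widetilde\alpha(\psi), \e(\widetilde\alpha(\psi)) \rangle = \langle \det \alpha(\psi), \e(\alpha(\psi)) \rangle = \eta(\theta_0(\psi))$.

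Next one computes $\beta$ on $\MM_1$ by pushing the $\WW_1$ computation one step further: writing $M_1, M_2$ for the invertible matrices with columns $(u_{11}, v_{11})$ and $(u_{22}, v_{22})$, one gets $\det \widetilde\alpha(\psi) = a_1^2 (\det M_1)(\det M_2)(xw - yz)$ and $\e(\widetilde\alpha(\psi)) = a_1^4 (\det M_1)^2 (\det M_2)^2\,(x \wedge y \wedge z \wedge w)$, so that $\langle \det \widetilde\alpha(\psi), \e(\widetilde\alpha(\psi)) \rangle = \langle xw - yz,\ x \wedge y \wedge z \wedge w \rangle$ independently of $\psi$. Thus $\bar\beta(\MM_1)$ is a single point of $H$, and $\beta(\MM_1)$ is the point of $\det^{-1}\{\langle xw - yz\rangle\} = \{\nu_1, \nu_2\}$ having $\epsilon$-value $+1$, namely $\nu_1$. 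The symmetric computation on $\WW_2$ changes the sign of the $\Lambda^4 V$-coordinate, so $\beta(\MM_2) = \nu_2$. At this point $\beta$ is a projective birational morphism restricting to an isomorphism $\MM_0 \to \N(V;2,2) \setminus \{\nu_1, \nu_2\}$, with $\beta^{-1}(\nu_i) = \MM_i \isom \PP^8$ for $i = 1, 2$ (in particular $\MM_1 \cap \MM_2 = \emptyset$, as these divisors map to the distinct points $\nu_1, \nu_2$).

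It remains to identify $\beta$ with the blow-up at $\{\nu_1, \nu_2\}$. First, $\nu_1, \nu_2$ are smooth points of $\N(V;2,2)$: on $\hat{H} = \{\res(q) = p^2\}$ the partial derivative in $p$ of the defining equation is $-2p \ne 0$ at $(xw - yz, \pm 1)$, and the $\CC^*$-action is free there. Second, $\MM$ is smooth along $\MM_1 \cup \MM_2$: the sheaves $\F$ parametrized there are stable and satisfy $\operatorname{Ext}^2(\F, \F) = \Hom(\F, \F \tensor \omega)^* = 0$, a non-zero morphism $\F \to \F \tensor \omega$ being excluded because $\F$ and $\F \tensor \omega$ have distinct reduced Hilbert polynomials. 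Consequently $\MM_1 \cup \MM_2$ is a Cartier divisor, and — after checking that $\beta^{-1}(\{\nu_1, \nu_2\})$ equals it scheme-theoretically, equivalently that the normal bundle of $\MM_i$ in $\MM$ is $\O_{\PP^8}(-1)$ — the universal property of blowing up yields a morphism $g \colon \MM \to \operatorname{Bl}_{\{\nu_1, \nu_2\}} \N(V;2,2)$ over $\N(V;2,2)$, an isomorphism away from the exceptional loci. The target is normal, being the blow-up of the normal variety $H$ at smooth points, so $g$ is surjective; and since $g^{-1}(E_i) = \MM_i$ for the exceptional divisor $E_i \isom \PP^8$, the morphism $g$ restricts to a surjection $\PP^8 \to \PP^8$, necessarily finite. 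Hence $g$ is quasi-finite and proper, therefore finite, and a finite birational morphism onto a normal variety is an isomorphism. Under this isomorphism $\beta$ becomes the blowing-down morphism, with exceptional divisor $\MM_1 \cup \MM_2$ and blowing-up locus $\{\nu_1, \nu_2\}$.

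I expect the main obstacle to be this last step, specifically the assertions that $\MM$ is smooth in a neighborhood of $\MM_1 \cup \MM_2$ and that $\beta^{-1}(\{\nu_1, \nu_2\})$ is reduced there, i.e.\ that $N_{\MM_i/\MM} \isom \O_{\PP^8}(-1)$; this is exactly what makes the universal property of the blow-up applicable, and it requires a genuine deformation-theoretic input beyond the explicit matrix manipulations. The boundary computations over $\WW_1$ and $\WW_2$ in the first two paragraphs are elementary, but they must be carried out with care, since they are precisely what promotes the rational map $\beta_0$ to a morphism defined along $\MM_1 \cup \MM_2$.
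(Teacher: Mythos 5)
Your construction of $\beta$ is essentially the paper's, in a mildly repackaged form: the paper clears denominators by multiplying the formula for $\alpha(\psi)$ by $a_2$ (obtaining a map defined on $\WW_0\cup\WW_1$ that sends all of $\WW_1$ to $\nu_1$) and symmetrically by $a_1$ on $\WW_0\cup\WW_2$, and then glues the two resulting morphisms $\MM_0\cup\MM_1\to\N(V;2,2)$ and $\MM_0\cup\MM_2\to\N(V;2,2)$, whereas you multiply once by $a_1a_2$ and descend the single map $\psi\mapsto\langle\det\widetilde\alpha(\psi),\e(\widetilde\alpha(\psi))\rangle$ through $\theta$ and the isomorphism $\eta$ of Proposition \ref{hypersurface}. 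The two devices are equivalent; your boundary computations agree with the paper's ($\MM_1\mapsto\nu_1$, $\MM_2\mapsto\nu_2$, separated by the sign of $\epsilon$), and the descent is legitimate because the map is $\GG$-invariant on the dense open subset $\WW_0$, hence on all of the irreducible variety $\WW$.

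The genuine issue is the last step, which you flag yourself: you reduce the identification of $\beta$ with the blow-up to the claim that $\beta^{-1}\{\nu_1,\nu_2\}$ is the reduced Cartier divisor $\MM_1\cup\MM_2$, equivalently that $N_{\MM_i/\MM}\isom\O_{\PP^8}(-1)$, and you leave this unproved; as written the argument is therefore incomplete exactly where the content of the proposition lies. For comparison, the paper compresses this step into the single sentence that $\nu_1$ and $\nu_2$ are smooth points, i.e.\ it appeals to the standard recognition statement for a proper birational morphism that is an isomorphism off two irreducible divisors, each isomorphic to $\PP^8$, each contracted to a smooth point of the normal target, with the source smooth along them --- and your $\operatorname{Ext}^2$-vanishing argument supplies this smoothness, which is the only deformation-theoretic input actually needed. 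If you want to keep your universal-property route, the normal bundle can be pinned down without any further moduli theory: for a line $\ell\subset\MM_i\isom\PP^8$, adjunction gives $-9=(K_{\MM}+\MM_i)\cdot\ell=(a+1)\deg\big(N_{\MM_i/\MM}|_\ell\big)$, where $a$ is the discrepancy of $\MM_i$ over the smooth germ of $\N(V;2,2)$ at $\nu_i$; since the center of $\MM_i$ is the point $\nu_i$ itself, $a\ge\dim-1=8$, so the only integral solution is $a=8$ and $\deg N_{\MM_i/\MM}|_\ell=-1$, i.e.\ $N_{\MM_i/\MM}\isom\O_{\PP^8}(-1)$, after which your Zariski-type argument identifies $\MM$ with the blow-up. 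So: the construction and the computation of the exceptional fibers are correct and match the paper; the final identification needs either this (or an equivalent) supplement, rather than the open-ended ``deformation-theoretic input'' you anticipate.
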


\begin{proof}
Recall that on $\MM_0 = \WW_0/\!\!/\GG$, $\beta_0$ is induced by the map sending $\psi$ to
\[
\psi_{21} - a_1^{-1} \left[
\begin{array}{c}
u_{21} \tensor 1 \\
v_{21} \tensor 1
\end{array}
\right] \left[
\begin{array}{cc}
1 \tensor u_{12} & 1 \tensor v_{12}
\end{array}
\right] - a_2^{-1} \left[
\begin{array}{c}
1 \tensor u_{22} \\
1 \tensor v_{22}
\end{array}
\right] \left[
\begin{array}{cc}
u_{11} \tensor 1 & v_{11} \tensor 1
\end{array}
\right].
\]
Equivalently, $\beta_0$ is induced by the map sending $\psi$ to
\[
a_2 \psi_{21} - a_1^{-1} a_2 \left[
\begin{array}{c}
u_{21} \tensor 1 \\
v_{21} \tensor 1
\end{array}
\right] \left[
\begin{array}{cc}
1 \tensor u_{12} & 1 \tensor v_{12}
\end{array}
\right] - \left[
\begin{array}{c}
1 \tensor u_{22} \\
1 \tensor v_{22}
\end{array}
\right] \left[
\begin{array}{cc}
u_{11} \tensor 1 & v_{11} \tensor 1
\end{array}
\right]
\]
which is defined on $\WW_0 \cup \WW_1$.
Clearly, this map factors through a morphism $\MM_0 \cup \MM_1 \to \N(V; 2, 2)$, which maps $\MM_1$ to the class of the matrix
\[
\left[
\begin{array}{c}
1 \tensor u_2 \\
1 \tensor v_2
\end{array}
\right] \left[
\begin{array}{cc}
u_1 \tensor 1 & v_1 \tensor 1
\end{array}
\right] = \left[
\begin{array}{cc}
x & y \\
z & w
\end{array}
\right],
\]
that is, to $\nu_1$.
Analogously, $\beta_0$ extends to a morphism defined on $\MM_0 \cup \MM_2$, which maps $\MM_2$ to the class of the matrix
\[
\left[
\begin{array}{c}
u_1 \tensor 1 \\
v_1 \tensor 1
\end{array}
\right] \left[
\begin{array}{cc}
1 \tensor u_2 & 1 \tensor v_2
\end{array}
\right] = \left[
\begin{array}{cc}
x & z \\
y & w
\end{array}
\right],
\]
that is, to $\nu_2$.
The two morphisms constructed thus far glue to a morphism $\beta \colon \MM \to \N(V; 2, 2)$.
Since $\nu_1$ and $\nu_2$ are smooth points, $\beta$ is a blow-down.
\end{proof}

\end{document}